 \newtheorem{thm}{Theorem}[section]
 \newtheorem{prop}[thm]{Proposition}
 \newtheorem{lem}[thm]{Lemma}
 \newtheorem{cor}[thm]{Corollary}
\theoremstyle{definition}
 \newtheorem{rem}[thm]{Remark}
 \newtheorem{definition}{Definition}[section]
\numberwithin{equation}{section}
\newcommand\mj{\mbox{\bf 1}}
\def\d#1{{#1\kern-0.4em\char"16\kern-0.1em}}
\def\D#1{{\raise0.2ex\hbox{-}\kern-0.4em#1}}
\def \Dj{\mbox{\raise0.3ex\hbox{-}\kern-0.4em D}}
\definecolor{britishracinggreen}{rgb}{0.0, 0.26, 0.15}
\def\zn{,\kern-0.09em,}
\definecolor{ruza}{rgb}{1.0, 0.01, 0.24}
\title{A calculus for $S^3$-diagrams of manifolds with boundary}
\author[Femi\' c]{Bojana Femi\' c}
\address{\scriptsize{Mathematical Institute SANU\\ Knez Mihailova 36, p.f.\ 367\\ 11001 Belgrade, Serbia}}
\email{ femicenelsur@gmail.com}
\author[Gruji\' c]{Vladimir Gruji\' c}
\address{\scriptsize{Faculty of Mathematics, University of Belgrade}}
\email{vgrujic@matf.bg.ac.rs}
\author[Obradovi\' c]{Jovana Obradovi\' c}
\address{\scriptsize{Mathematical Institute SANU\\ Knez Mihailova 36, p.f.\ 367\\ 11001 Belgrade, Serbia}}
\email{obradovic@karlin.mff.cuni.cz}
\author[Petri\' c]{Zoran Petri\' c}
\address{\scriptsize{Mathematical Institute SANU\\ Knez Mihailova 36, p.f.\ 367\\ 11001 Belgrade, Serbia}}
\email{zpetric@mi.sanu.ac.rs}
\date{}
\begin{document}

\begin{abstract}
We introduce a diagrammatic language for compact, orientable 3-dimensional manifolds with boundary. A diagrammatic calculus  appropriate for this language is introduced and its completeness is proved in the paper. Moreover, a calculus consisting of a finite list of local moves is presented.

\vspace{.3cm}

\noindent {\small {\it Mathematics Subject Classification} ({\it
        2010}): 57M25, 57M27}

\vspace{.5ex}

\noindent {\small {\it Keywords$\,$}: knots, links, surgery, Kirby's calculus, 3-manifolds with boundary, gluing}
\end{abstract}

\maketitle
\section{Introduction}

The aim of this paper is to introduce a calculus for a presentation of compact, orientable, connected 3-manifolds with boundary in terms of diagrams embedded in $S^3$ in a form akin to the standard surgery presentation of closed, orientable, connected 3-manifolds. Our motivation to introduce such a presentation of manifolds is to give a completely combinatorial description of the category 3Cob\footnote{By this we mean a diagrammatic presentation of cobordisms and a calculus for composition of diagrams.}, whose arrows are 3-dimensional cobordisms, which is an ongoing project. We hope this could support further investigations of faithfulness of 3-dimensional Topological Quantum Field Theories. On the other hand, the calculus could be applied within some coherence results in Category theory (see \cite[Section~9]{DPZ}).

That every closed, orientable, connected 3-manifold may be obtained by surgery on a link in $S^3$   was proved by Wallace, \cite{W60} and (independently) by Lickorish, \cite{L62}. This result provides a language for presentation of such manifolds. The rational surgery calculus for this language was introduced by Rolfsen, \cite[Chapter 9.H]{R76}. This calculus consists of two types of modifications and he proved that two surgery descriptions yield homeomorphic 3-manifolds if one can be transformed into the other by a finite sequence of these modifications. At about the same time, Kirby, \cite{K78} introduced another surgery calculus and he proved its completeness, i.e.\ that two surgery descriptions (with integral framing) yield homeomorphic 3-manifolds if and only if one can be transformed into the other by a finite sequence of operations from this calculus.
By relying on Kirby's result, Rolfsen, \cite{R84}, proved the completeness of his calculus.

Fenn and Rourke, \cite{FR79}, merged two Kirby's operations into an infinite list of integral moves of one type, which is a special case of Rolfsen's second modification. Roberts, \cite{R97}, developed a calculus for surgery data in arbitrary compact, connected 3-manifold (possibly with boundary, or non-orientable). This calculus consists of moves of three types and he proved its completeness.

The first part of our work uses a generalization of Wallace-Lickorish result to compact, orientable, connected 3-manifolds with boundary in order to establish a diagrammatic language of these manifolds.
This language is based on the well known language for closed manifolds that consists of surgery data in $S^3$ written in terms of framed links. We extend the ``alphabet'' by introducing some rigid ``symbols'' in the form of wedges of circles. The intuition behind a wedge of circles in a diagram is that its neighbourhood is removed from $S^3$ forming one component of the boundary. By the \emph{neighbourhood} of a wedge of circles we mean its regular neighbourhood in terminology of \cite[Definition~1.4]{J07}, which is appropriate for piecewise-linear category, or its graphical neighbourhood in terminology of \cite[Definition~6]{FH19}, which is appropriate for smooth category.

The second part adapts Roberts' calculus into a diagrammatic calculus adequate for our language. This adaptation is akin to the adaptation of Kirby's calculus made by Fenn and Rourke. As the Fenn and Rourke local moves can be reduced to a finite list, which is shown by Martelli, \cite{M12}, our calculus is also presentable by a finite list of local moves. Finally, we develop a rational surgery calculus for our language. We prove the completeness for all the calculi.

We do not specify the category in which we work. The results hold either in topological, PL or smooth category. From now on, by a \emph{manifold} we mean a compact, connected and oriented 3-manifold possibly with boundary. We take the orientation of $S^3$ fixed throughout the paper. A disk is always a 2-dimensional disk and a ball is a 3-dimensional ball. For a solid torus $S^1\times D^2$ we call $\{\ast\}\times D^2$ its \emph{meridional disk} and the boundary of this disk, i.e.\ $\{\ast\}\times S^1$, a \emph{meridian} of this torus.

Let $\Sigma$ be a closed surface such that $\Sigma\subseteq \partial M\cap \partial N$, for manifolds $M$ and $N$. (The orientation of $\Sigma$ is induced by the orientation of one of the manifolds and it is opposite to the orientation induced by the other.) Let $\varphi$ and $\theta$ be orientation preserving automorphisms of $\Sigma$. We denote by $M\tensor*[^{}_\varphi]{+}{^{}_\theta}N$ the result of gluing $M$ and $N$ along $\varphi$ and $\theta$, i.e., the manifold $(M\sqcup N)/_\sim$, where $M\sqcup N=(M\times\{1\})\cup(N\times\{2\})$ and $\sim$ is such that
\begin{equation}\label{sim}
   \forall x\in\Sigma \quad (\varphi(x),1)\sim (\theta(x),2).
\end{equation}
When $\varphi$ or $\theta$ are omitted, this means that they are identities.

\section{The language}\label{diagrammatics}

Consider a finite set of wedges of circles in $S^3$. By removing neighbourhoods of these wedges, one obtains a manifold $S^3_-$. Consider a framed link in $S^3_-$ and perform the surgery along this link, i.e.\ remove the tubular neighbourhood of every link component and sew it back according to the corresponding framing. Obviously, the result is a manifold whose boundary is canonically identified with the original $\partial(S^3_-)$. Hence, a diagram in $S^3$ consisting of a collection of wedges of circles, together with a framed link (see Figure~\ref{dijagram1}) presents a manifold. (For the sake of better visualisation we mark the wedges of circles in red, although they could be distinguished from the components of the link, even in the case when a wedge consists of a single circle, since such a circle is unlabeled.)
\begin{figure}[h!]
    \centerline{\includegraphics[width=.5\textwidth]{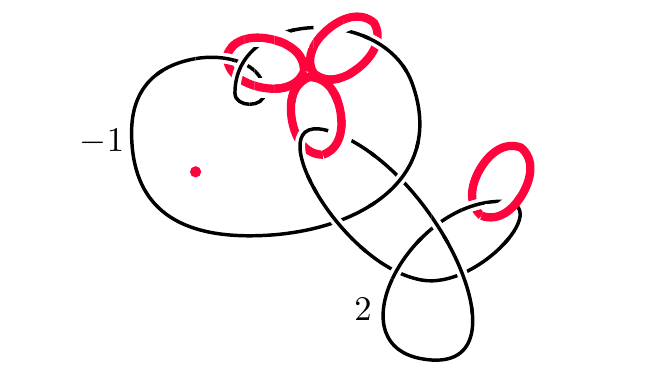}} \caption{A diagram}\label{dijagram1}
\end{figure}
In fact, every manifold is presentable in such form, and we explain this in the sequel.

Note that our presentation includes something more than just the homeomorphism type of a manifold. As we mentioned above, it gives the canonical identification of the boundary of the manifold with the boundary of $S^3_-$. For example, consider the following two diagrams.
\begin{figure}[H]
    \centerline{\includegraphics[width=0.8\textwidth]{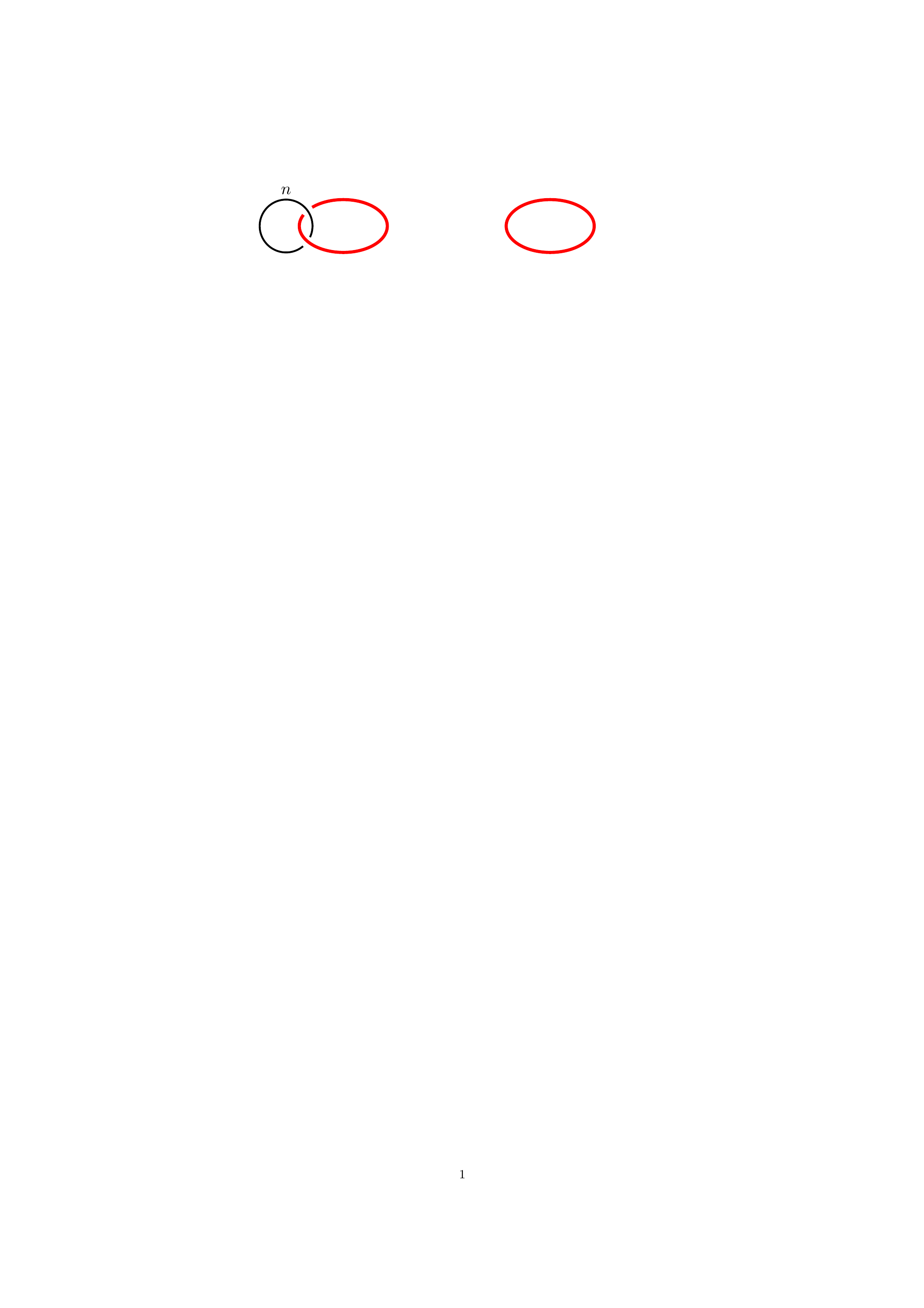} }
    \caption{Two diagrams}\label{d100}
\end{figure}
The left-hand side and the right-hand side diagram both present the solid torus, but there is no homeomorphism between the resulting manifolds that fixes the points of the common boundary (obtained by canonical identifications). This is the reason to introduce the notion of $\partial$-equivalent manifolds below. The calculus introduced in Section~\ref{calculus} takes care to distinguish such diagrams.

Let $C'$ be a disjoint union of balls and manifolds of the form $\Lambda\times I$ for $\Lambda$ a closed, orientable, connected surface. Let $\phi_i\colon D_i\to D_i'$, $i\in\{1,\ldots,m\}$ be homeomorphisms of disjoint disks, where each disk is either in the boundary of a ball component or in
a $\Lambda\times \{1\}$ boundary component of $C'$. A \emph{compression body} $C$ is the connected orientable manifold obtained from $C'$ by gluing its components along maps $\phi_i$. (Note that the disks and the gluing maps cannot be arbitrary according to demands for connectedness and orientability.) The boundary of $C$ consisting of the components $\Lambda\times \{0\}$ is denoted by $\partial_- C$ and the rest of the boundary of $C$, which is always closed, orientable and connected surface, is denoted by $\partial_+ C$. This definition of compression body is not standard but it is justified via \cite[Lemma~2.33]{J07} or \cite[Remark~3.1.3~(3)]{SSS}. 

For a compression body $C$ we consider its unlinked and unknotted embedding in $S^3$. Denote with $\Sigma$ the surface $\partial_+ C$ and let $H$ be the handlebody embedded in $S^3$ whose boundary is $\Sigma$ and $C\cap H=\Sigma$. In this way we have that $C\cup H$ is $S^3$ with some unlinked and unknotted handlebodies cut out. Let the orientation of $\Sigma$ be induced by the orientation of $H$, which agrees with the orientation of $S^3$.

By \cite[Theorem~3.1.10]{SSS} for every manifold $M$ there exists a compression body $C$ and an orientation preserving homeomorphism $\theta\colon\Sigma\to\Sigma$ such that $\partial M=\partial_-C$, and $M=C+_\theta H$. Honestly, these equalities are homeomorphisms, but for the sake of simplicity, we assume that $M$ is $C+_\theta H$, and that $\partial M$ is canonically identified with $\partial_-C$.

Let $\Xi$ be a closed surface, which is a part of the boundary of a manifold $M$. For a manifold $M'$ having $\Xi$ as a part of its boundary, we say that $M$ and $M'$ are $\Xi$-\emph{equivalent} when there exists an orientation preserving homeomorphism $w\colon M\to M'$ such that the following diagram commutes:
\begin{center}
\begin{tikzcd}
\Xi\arrow[hook]{r}\arrow[hook]{dr}
&M \arrow{d}{w}\\
&M'.
\end{tikzcd}
\end{center}
This means that $w$ keeps the points of $\Xi$ fixed. If $\Xi$ is the common boundary of two $\Xi$-equivalent manifolds, then we say they are $\partial$-\emph{equivalent}.

Let $C$ be a compression body with $\Sigma=\partial_+C$ and $\Xi=\partial_-C$, and let $N$ be a manifold such that $\Sigma\subseteq\partial N$. (The orientation of $\Sigma$ is induced by the orientation of $N$ and it is opposite to the orientation induced by the orientation of $C$.) Let $\theta\colon \Sigma\to\Sigma$ and $h\colon N\to N$ be orientation preserving homeomorphisms.

\begin{lem}\label{lema1}
The manifolds $C+_\theta N$ and $C+_{h\theta} N$ are $\Xi$-equivalent.
\end{lem}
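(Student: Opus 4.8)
The plan is to construct an explicit orientation preserving homeomorphism $w\colon C+_\theta N\to C+_{h\theta}N$ that is the identity on the $C$-part and agrees with $h$ on the $N$-part. Since $\Xi=\partial_-C$ lies inside $C$, such a $w$ will keep the points of $\Xi$ fixed, and so it will witness the $\Xi$-equivalence demanded by the definition. Concretely, I would obtain $w$ as the map induced on quotients by the self-homeomorphism $\mathrm{id}_C\sqcup h$ of the disjoint union $C\sqcup N$.

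First I would unfold the definitions: by \eqref{sim} (with the omitted left subscript equal to the identity) we have $C+_\theta N=(C\sqcup N)/\!\sim_\theta$, where $(x,1)\sim_\theta(\theta(x),2)$ for $x\in\Sigma$, and likewise $C+_{h\theta}N=(C\sqcup N)/\!\sim_{h\theta}$, where $(x,1)\sim_{h\theta}(h\theta(x),2)$. The key step is to check that $\mathrm{id}_C\sqcup h$ carries the relation $\sim_\theta$ to the relation $\sim_{h\theta}$, so that it descends to a well-defined map $w$ on the quotients. Indeed, $\mathrm{id}_C\sqcup h$ sends the pair $(x,1)$ and $(\theta(x),2)$ identified by $\sim_\theta$ to the pair $(x,1)$ and $(h\theta(x),2)$, and these are precisely the points identified by $\sim_{h\theta}$. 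Hence $w$ is well defined; setting $w([(c,1)])=[(c,1)]$ for $c\in C$ and $w([(n,2)])=[(h(n),2)]$ for $n\in N$ makes this explicit.

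Next I would verify that $w$ is a homeomorphism by exhibiting its inverse, namely the map induced by $\mathrm{id}_C\sqcup h^{-1}$; the same computation shows this descends to the quotient and is two-sided inverse to $w$. Continuity in both directions is immediate from the quotient topology, since the defining maps are continuous on each of the two closed pieces $C$ and $N$ and agree along the glued surface. Because the orientations of $C+_\theta N$ and $C+_{h\theta}N$ are both the ones induced from the fixed orientations of $C$ and $N$, and $w$ restricts to the orientation preserving maps $\mathrm{id}_C$ and $h$, the homeomorphism $w$ is orientation preserving. Finally $w$ is the identity on $C$, which contains $\Xi$, so the triangle defining $\Xi$-equivalence commutes.

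The one point that I expect to require care in the write-up is that $h$ need not fix $\Sigma$ setwise: it merely transports the gluing surface $\theta(\Sigma)=\Sigma\subseteq\partial N$ used in $C+_\theta N$ to the gluing surface $h\theta(\Sigma)=h(\Sigma)\subseteq\partial N$ used in $C+_{h\theta}N$. The piecewise (i.e.\ $\mathrm{id}_C\sqcup h$) definition of $w$ accommodates this automatically, so no separate matching of boundary subsurfaces is needed; making this observation precise is the main, though mild, subtlety of the argument.
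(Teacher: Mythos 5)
Your proof is correct, but it takes a different route from the paper's. The paper never writes down the map $w$ on points: it presents the two gluings as pushout squares and invokes the universal property twice --- once to obtain $w$ (from the square for $C+_\theta N$, using that $u'|_\Sigma=v'h\theta$) and once to obtain a candidate inverse $w'$ --- and then concludes $w'w=\mathrm{id}$ and $ww'=\mathrm{id}$ from the uniqueness clause of the pushout property, so that well-definedness, continuity and invertibility are all absorbed into abstract nonsense. You instead construct the homeomorphism explicitly as the descent of $\mathrm{id}_C\sqcup h$ to the quotients $(C\sqcup N)/\!\sim_\theta\;\to\;(C\sqcup N)/\!\sim_{h\theta}$ and verify each point by hand. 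The two arguments produce the very same map (the paper's relations $(\dagger)$, $(\dagger\dagger)$ say precisely that $w$ is the identity on the $C$-part and acts as $h$ on the $N$-part), so the content is equivalent. What the paper's version buys is economy and reusability: the identical pushout pattern is recycled verbatim in the proof of Lemma~\ref{lema2}. What your version buys is that the explicit description of $w$, which the paper has to state separately as the ``moreover'' clause of Corollary~\ref{torus2}, comes for free, as does the orientation-preservation check, which the paper leaves implicit. One remark on your closing ``subtlety'': under the paper's definition of gluing in \eqref{sim}, the notation $C+_{h\theta}N$ already presupposes that $h\theta$ is an automorphism of $\Sigma$, i.e.\ that $h(\Sigma)=\Sigma$; and in every application in the paper (Corollary~\ref{torus2} and the surgery construction) $h$ restricts on $\Sigma$ to $\theta$ itself. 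So your more general reading, gluing along the embedding $h\theta\colon\Sigma\hookrightarrow\partial N$, is a harmless strengthening rather than a gap to be closed.
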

\begin{proof}
Consider the following diagram

\begin{center}
\begin{tikzcd}
\Sigma\arrow{r}{\theta}\arrow[hook]{d}
&N \arrow{r}{h}\arrow{d}{v}
&N \arrow{r}{h^{-1}}\arrow{d}{v'}
&N \arrow{d}{v}\\
C\arrow{r}{u}\arrow[bend right]{rr}{u'}\arrow[bend right]{rrr}[swap]{u}
&C+_\theta N\arrow[dotted]{r}{w}
&C+_{h\theta} N\arrow[dotted]{r}{w'}
&C+_\theta N\\
\Xi\arrow[hook]{u}
\end{tikzcd}
\end{center}
where

\begin{center}
\begin{tikzcd}
\Sigma\arrow{r}{\theta}\arrow[hook]{d}
&N \arrow{d}{v}\\
C\arrow{r}{u}
&C+_\theta N
\end{tikzcd}
and
\begin{tikzcd}
\Sigma\arrow{r}{h\theta}\arrow[hook]{d}
&N \arrow{d}{v'}\\
C\arrow{r}{u'}
&C+_{h\theta} N
\end{tikzcd}
\end{center}
are pushouts (gluing).

Since (2) is a pushout, we have $u'|_\Sigma=v'h\theta$, and since (1) is a pushout there exists $w\colon C+_\theta N\to C+_{h\theta} N$ such that
\[
(\dagger)\quad u'=wu,\quad\quad (\dagger\dagger)\quad v'h=wv.
\]
Analogously, since (1) is a pushout, we have $u|_\Sigma=vh^{-1}h\theta$, and since (2) is a pushout there exists $w'\colon C+_{h\theta} N\to C+_\theta N$ such that
\[
(\ast)\quad u=w'u',\quad\quad (\ast\ast)\quad vh^{-1}=w'v'.
\]
Hence, $w'wu\stackrel{\dagger}{=}w'u'\stackrel{\ast}{=}u$ and $w'wv\stackrel{\dagger\dagger}{=}w'v'h\stackrel{\ast\ast}{=}vh^{-1}h=v$. Finally, (1) is a pushout and there exists unique $g\colon C+_\theta N\to C+_\theta N$ such that $u=gu$ and $v=gv$. Since $g=\mj_{C+_\theta N}$ satisfies these conditions, we conclude that $w'w=\mj_{C+_\theta N}$. Analogously we prove that $ww'=\mj_{C+_{h\theta} N}$. Hence, $w$ is a homeomorphism and by $(\dagger)$ we have that

\begin{center}
\begin{tikzcd}
\Xi\arrow[hook]{r}\arrow[hook]{dr}
&C+_\theta N \arrow{d}{w}\\
&C+_{h\theta} N
\end{tikzcd}
\end{center}
commutes.
\end{proof}

\begin{cor}\label{torus2}
For $C$, $\Sigma$, $N$, $\theta$ and $\Xi$ as above, where $\theta$ extends to an automorphism $h$ of $N$, we have that $C+N$ and $C+_\theta N$ are $\Xi$-equivalent. Moreover, the homeomorphism $w\colon C+N\to C+_\theta N$ underlying this $\Xi$-equivalence is the identity on $C$ and for every $x\in N$, $w(x)=h(x)$.
\end{cor}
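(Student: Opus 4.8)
The plan is to obtain this corollary as a direct specialization of Lemma~\ref{lema1}. In that lemma the gluing map along $\Sigma$ on the $N$-side is arbitrary; here I would instantiate it at the identity $\mj_\Sigma$. With this choice $C+_{\mj_\Sigma}N$ is exactly $C+N$, while $C+_{h\mj_\Sigma}N$ has attaching map $x\mapsto h(x)$ for $x\in\Sigma$. Since $h$ extends $\theta$, that is $h|_\Sigma=\theta$, this attaching map equals $x\mapsto\theta(x)$, which is precisely the attaching map defining $C+_\theta N$. As the result of gluing depends only on the attaching map, $C+_{h\mj_\Sigma}N$ and $C+_\theta N$ are literally the same manifold, so Lemma~\ref{lema1} applied with its $\theta$ taken to be $\mj_\Sigma$ yields the $\Xi$-equivalence $w\colon C+N\to C+_\theta N$ directly.

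For the second assertion I would read the explicit description of $w$ off the proof of Lemma~\ref{lema1} after the substitution $\theta=\mj_\Sigma$. The canonical inclusions $u\colon C\to C+N$ and $u'\colon C\to C+_\theta N$ are then the standard embeddings of $C$ into the two glued spaces, and relation $(\dagger)$, namely $u'=wu$, says that $w$ carries $u(C)$ onto $u'(C)$ by the identity on $C$; hence $w$ is the identity on the copy of $C$. Likewise, writing $v\colon N\to C+N$ and $v'\colon N\to C+_\theta N$ for the inclusions of $N$, relation $(\dagger\dagger)$, namely $v'h=wv$, gives $w(v(x))=v'(h(x))$ for every $x\in N$, so that $w$ acts as $h$ on the copy of $N$. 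Under the canonical identifications this is exactly the claim that $w$ is the identity on $C$ and $w(x)=h(x)$ for $x\in N$.

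The only points requiring attention are bookkeeping rather than genuine obstacles: one must confirm that the hypotheses of Lemma~\ref{lema1} are met, that is, that $h$ is an orientation preserving automorphism of $N$ (given) and that $h(\Sigma)=\Sigma$, which holds automatically since $h|_\Sigma=\theta$ is a bijection of $\Sigma$ onto itself. As everything reduces to substituting $\theta=\mj_\Sigma$ into the already-established lemma and transcribing $(\dagger)$ and $(\dagger\dagger)$, I anticipate no substantive difficulty; the entire content of the corollary is carried by Lemma~\ref{lema1}.
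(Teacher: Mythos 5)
Your proposal is correct and is exactly the derivation the paper intends: the corollary is stated without proof as an immediate consequence of Lemma~\ref{lema1}, obtained by instantiating that lemma's gluing map at $\mj_\Sigma$ (so that $C+_{\mj_\Sigma}N=C+N$ and $C+_{h\mj_\Sigma}N=C+_\theta N$ since $h|_\Sigma=\theta$), with the ``moreover'' clause read off from $(\dagger)$ and $(\dagger\dagger)$ just as you describe.
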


\begin{rem}\label{isotopy gluing}
If $\theta,\theta'\colon\Sigma\to \Sigma$ are isotopic, i.e.\ if they denote the same element of the mapping class group of $\Sigma$, then $C+_\theta N$ and $C+_{\theta'} N$ are $\Xi$-equivalent.
\end{rem}

\begin{definition}
Let $l$ be a simple closed curve in $\Sigma$. Consider an annulus in $\Sigma$ with $l$ as a component of its boundary. We say that $\theta\colon\Sigma\to\Sigma$ is a \emph{Dehn twist} when it is the identity outside the annulus and it is defined by cutting $\Sigma$ along $l$ and rotating for $360^\circ$ the copy of $l$ bounding the annulus, and then gluing it with the other copy back again. See Figure~\ref{Dehn} where every radius of the annulus is mapped by $\theta$ and $\theta^{-1}$ into a spiral.
\end{definition}

\begin{figure}[h!h!h!]
    \centerline{\includegraphics[width=1\textwidth]{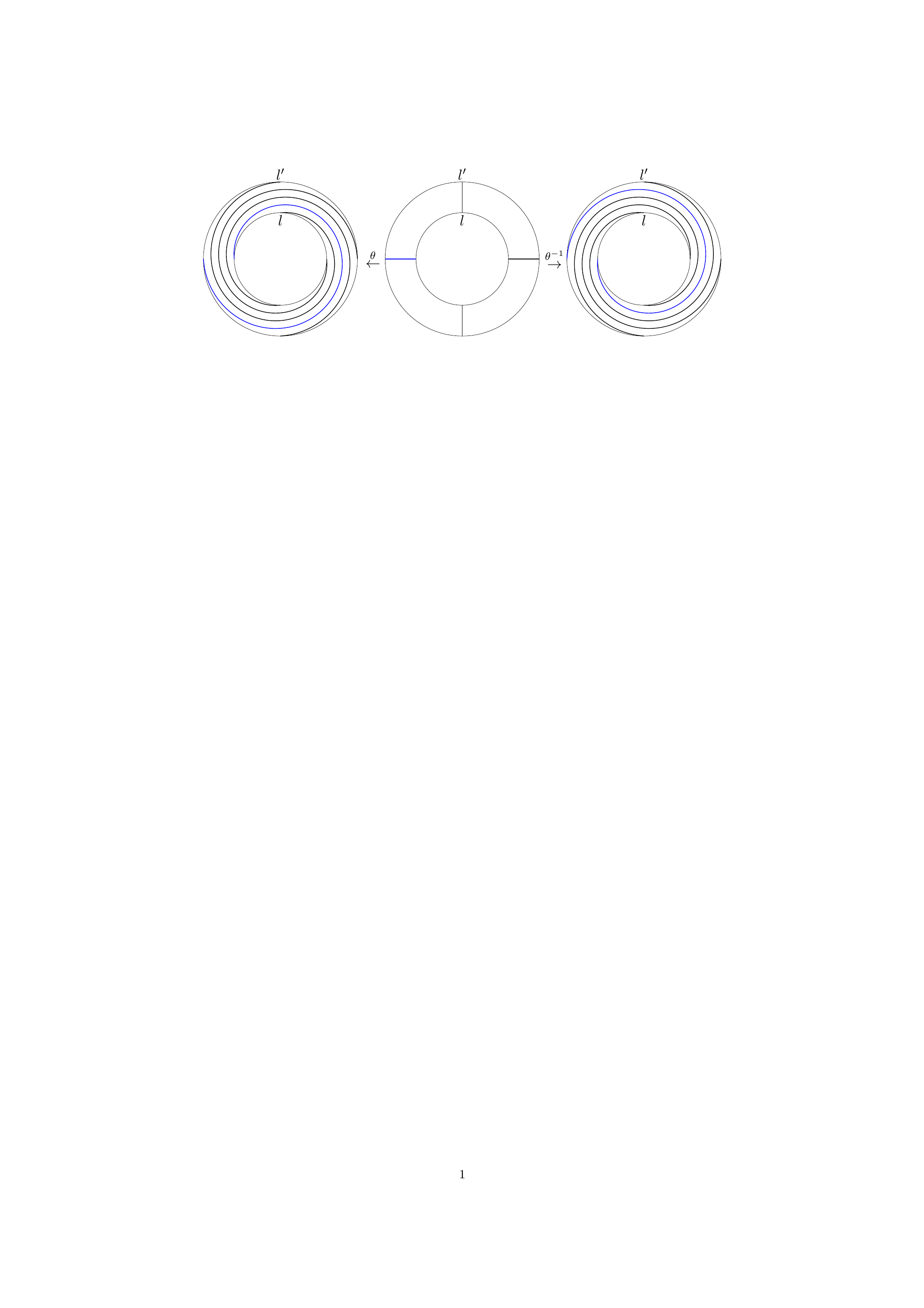} } \caption{Dehn twist and its inverse}\label{Dehn}
\end{figure}

\begin{thm}\cite[Theorem~1]{L62}\label{Lickorish}
Every orientation preserving homeomorphism of $\Sigma$ is isotopic to a composition of Dehn twists.
\end{thm}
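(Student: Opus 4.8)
The plan is to reformulate the statement as the assertion that the mapping class group $\mathrm{Mod}(\Sigma)$, i.e.\ the group of isotopy classes of orientation-preserving homeomorphisms of $\Sigma$, is generated by Dehn twists, and to prove this by induction on the genus $g$ of $\Sigma$. When $g=0$ the surface is a sphere, $\mathrm{Mod}(\Sigma)$ is trivial, and there is nothing to prove; the case $g=1$ of the torus can be treated directly from the identification $\mathrm{Mod}(\Sigma)\cong\mathrm{SL}_2(\mathbb{Z})$, where the two generating elementary matrices are realized by Dehn twists about a meridian and a longitude. For the inductive step the central tool is the following transitivity statement, which I would establish first: if $a$ and $b$ are any two non-separating simple closed curves in $\Sigma$, then some product of Dehn twists carries $a$ to a curve isotopic to $b$.

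To prove this transitivity I would argue in two stages. First, the local computation: if $a$ and $b$ are simple closed curves with geometric intersection number $i(a,b)=1$, then a regular neighbourhood of $a\cup b$ is a one-holed torus, and a direct check there (for instance via the Picard--Lefschetz action on homology) shows that the composite $T_aT_b$ sends $a$ to a curve isotopic to $b$, i.e.\ $T_aT_b(a)=b$. Second, the global step: any two non-separating simple closed curves $a$ and $b$ can be joined by a finite \emph{chain} $a=c_0,c_1,\ldots,c_n=b$ of non-separating simple closed curves in which consecutive curves satisfy $i(c_k,c_{k+1})=1$. This connectivity is proved by a cut-and-count argument: cutting $\Sigma$ along $a$ yields a connected surface of smaller genus, which allows one to slide $b$ across the handles one intersection at a time. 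Composing the two-twist moves along such a chain then produces the required product of Dehn twists.

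With transitivity in hand the inductive step goes as follows. Given $h\in\mathrm{Mod}(\Sigma)$, choose a non-separating simple closed curve $a$; since $h$ preserves orientation, $h(a)$ is again non-separating, so by the transitivity lemma there is a product $P$ of Dehn twists with $Ph(a)$ isotopic to $a$. After an isotopy I may assume that $Ph$ maps $a$ to itself setwise. Cutting $\Sigma$ along $a$ then produces a surface $\Sigma'$ of genus $g-1$ with two boundary circles, on which $Ph$ induces a mapping class; by the inductive hypothesis, applied in the bordered setting, this induced class is a product of Dehn twists supported in the interior, and re-gluing expresses $Ph$, and hence $h$, as a product of Dehn twists.

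I expect the main obstacle to lie in the bookkeeping of the induction rather than in any single computation. Two points need care. First, establishing the chain-connectivity lemma rigorously requires controlling how curves may be isotoped after cutting along handles. Second, one must set up the induction so that it applies to compact surfaces \emph{with boundary}, where Dehn twists parallel to the boundary must be tracked and where a homeomorphism fixing a curve setwise need not fix it pointwise; the standard device is to strengthen the inductive hypothesis to the bordered case and to invoke the Alexander method (a homeomorphism fixing a sufficiently rich collection of arcs and curves is isotopic to the identity) to close the induction. One also has to account for the case in which $Ph$ reverses the orientation of $a$, and hence swaps the two boundary circles of the cut surface; this is compatible with orientation-preservation and is dealt with by a routine but necessary check.
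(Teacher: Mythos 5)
The paper does not prove this statement at all: it is quoted as Theorem~1 of \cite{L62} (the Dehn--Lickorish twist theorem) and used as a black box in building the surgery presentation of Proposition~\ref{surgery language}. So your argument cannot match the paper's ``proof''; what you have written is instead the standard modern proof of Lickorish's theorem, essentially the Farb--Margalit treatment: the local computation $T_aT_b(a)=b$ when $i(a,b)=1$ (which is correct; on homology $T_aT_b$ sends $[a]$ to $[a]+[b]-[a]=[b]$, and the isotopy version holds in the one-holed torus), chain connectivity of non-separating simple closed curves, transitivity of the twist group on such curves, and an induction on genus closed by the Alexander method. This route is sound and is, in spirit, a streamlined descendant of Lickorish's original argument. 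The one structural point you should tighten is the induction itself: cutting $\Sigma$ along a non-separating curve produces a surface with two boundary circles, so an induction on genus alone, with base cases only the closed sphere and torus, does not close. You need a double induction on genus \emph{and} number of boundary components, with the planar bordered cases (disk, annulus, pair of pants, and holed disks, where twist generation amounts to the fact that pure braid groups are generated by the twists about pairs of holes) handled separately; in the modern treatment this bookkeeping is done via the Birman exact sequence. You gesture at this in your last paragraph, but it is the main burden of the proof rather than a routine check. For the purposes of the paper, of course, the citation of \cite{L62} suffices, and your proposal is best viewed as supplying a self-contained proof of an imported result.
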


Consider the simplest case when $\theta$ is a Dehn twist along $l\subseteq\Sigma$ and $M$ is $C+_\theta H$. Let a blade of the shape given at the left-hand side of Figure~\ref{hirurgija1} be immersed in $H$. Imagine that $A$ slides along $l$, while $AB$ is perpendicular to $\Sigma$ and $l$ is perpendicular to the plane of the blade (see Figure~\ref{hirurgija1}; the line $DC$ intersects $\Sigma$ in $l'$). Note that $l$ could be knotted, which makes the picture more involved.
\begin{figure}
    \centerline{\includegraphics[width=0.9\textwidth]{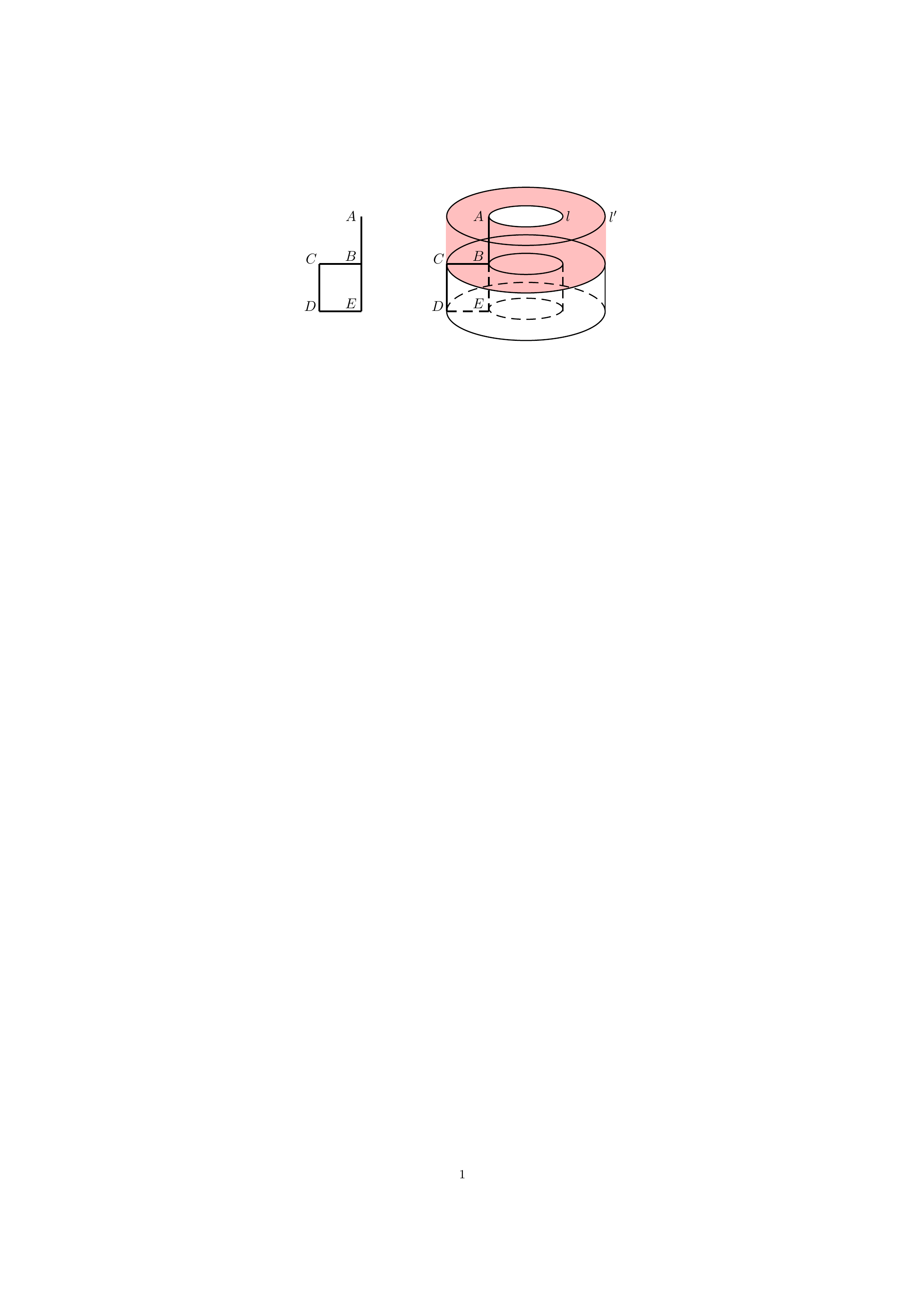}} \caption{}\label{hirurgija1}
\end{figure}

Let $N$ be $H-\overset{\circ}{T}$ where $T$ is the solid torus (with corners) cut out from $H$ by the square $BCDE$ of the blade. Let $h\colon N\to N$ be the identity everywhere but in the region between $T$ and $\Sigma$ (highlighted in Figure~\ref{hirurgija1}). Let $h$ be defined at each level $t$ between $A$ and $B$ of that region, as the Dehn twist $\theta$. This makes $h$ an automorphism of $N$ whose restriction to $\Sigma$~is~$\theta$.

Let us denote the manifold $C\cup H$, which is $S^3$ with some unlinked and unknotted handlebodies removed, by $S^3_-$. Note that the solid torus $T$ could be knotted and linked with the other handlebodies mentioned above. We have that $C\cup N$ is $S^3_- - \overset{\circ}{T}$ and by Corollary~\ref{torus2}, for $\Xi$ being $\partial_- C$, it is $\Xi$-equivalent to $C+_\theta N$, which is $M-\overset{\circ}{T}$.

Since $(M-\overset{\circ}{T})\cup T=M$ and $M-\overset{\circ}{T}$ and $S^3_- - \overset{\circ}{T}$ are homeomorphic, the manifold $M$ could be obtained from $S^3_- - \overset{\circ}{T}$ by sewing back the solid torus $T$ by an appropriate gluing homeomorphism. The following lemma formalises this procedure and describes this gluing homeomorphism.

\begin{lem}\label{lema2}
Let $K$, $K'$ and $L$ be manifolds and let $\Delta$ and $\Xi$ be disjoint closed surfaces such that $\Delta\cup\Xi=\partial K=\partial K'$, and $\Delta=\partial L$. If $K$ and $K'$ are $\Xi$-equivalent via $w$, and $\varphi\colon\Delta\to\Delta$ is the restriction $w^{-1}|_\Delta$, then  $K\prescript{}{\varphi}{+}L$ and $K'+L$ are $\Xi$-equivalent.
\end{lem}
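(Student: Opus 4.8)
The plan is to exhibit the $\Xi$-equivalence explicitly by applying $w$ on the $K$-part and the identity on the $L$-part, following the pushout argument of Lemma~\ref{lema1}.

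First I would isolate the one structural property of $w$ that is needed. As $w\colon K\to K'$ is a homeomorphism fixing $\Xi$ pointwise, it carries $\partial K=\Delta\cup\Xi$ onto $\partial K'=\Delta\cup\Xi$ fixing the component $\Xi$; since $\Delta$ and $\Xi$ are disjoint, $w$ must therefore map $\Delta$ onto $\Delta$. Hence $w|_\Delta\colon\Delta\to\Delta$ is a homeomorphism, and from $\varphi=w^{-1}|_\Delta$ we obtain $w|_\Delta=\varphi^{-1}$, that is $w|_\Delta\circ\varphi=\mj_\Delta$.

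Next I would present both gluings as pushouts, exactly as in Lemma~\ref{lema1}. Writing $j\colon\Delta\hookrightarrow K$, $j'\colon\Delta\hookrightarrow K'$ and $\iota\colon\Delta\hookrightarrow L$ for the boundary inclusions, the manifold $K\prescript{}{\varphi}{+}L$ is the pushout of the span $K\xleftarrow{j\varphi}\Delta\xrightarrow{\iota}L$, and $K'+L$ is the pushout of $K'\xleftarrow{j'}\Delta\xrightarrow{\iota}L$ with injections $a\colon K'\to K'+L$ and $b\colon L\to K'+L$ satisfying $aj'=b\iota$. To apply the universal property of the first pushout I would use the maps $aw\colon K\to K'+L$ and $b\colon L\to K'+L$; the only compatibility to verify is that they agree on $\Delta$, i.e.\ $aw\circ j\varphi=b\circ\iota$. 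Using $wj=j'(w|_\Delta)$ together with the identity of the previous paragraph, $w\circ j\varphi=j'(w|_\Delta\circ\varphi)=j'$, so $aw\circ j\varphi=aj'=b\iota$, as required. This produces a unique $\tilde w\colon K\prescript{}{\varphi}{+}L\to K'+L$ restricting to $w$ on $K$ and to the identity on $L$.

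Finally I would check that $\tilde w$ is the desired $\Xi$-equivalence. Building analogously a map out of $K'+L$ from $w^{-1}$ and $\mj_L$, the uniqueness clause of the universal property forces the two composites to be identities, just as $w'w=\mj$ was deduced in Lemma~\ref{lema1}; thus $\tilde w$ is a homeomorphism, and it is orientation preserving since both $w$ and $\mj_L$ are. As $\Xi$ sits in the $K$-part, disjoint from the gluing locus $\Delta$, and $w$ fixes $\Xi$, the triangle defining $\Xi$-equivalence commutes. I expect the only delicate point to be the bookkeeping of the gluing directions, namely checking $w|_\Delta\circ\varphi=\mj_\Delta$ so that $w\circ j\varphi=j'$; the rest repeats verbatim the pushout manipulation already carried out for Lemma~\ref{lema1}.
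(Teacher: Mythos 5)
Your proposal is correct and takes essentially the same route as the paper: both proofs realize $K\prescript{}{\varphi}{+}L$ and $K'+L$ as pushouts, use the universal property to produce the map that is $w$ on $K$ and the identity on $L$ (and its inverse from $w^{-1}$ and $\mj_L$), deduce that the composites are identities by uniqueness exactly as in Lemma~\ref{lema1}, and read off the $\Xi$-equivalence from the fact that $w$ fixes $\Xi$. Your explicit check that $w|_\Delta\circ\varphi=\mj_\Delta$, hence $w\circ j\varphi=j'$, is the compatibility condition the paper leaves implicit in its commutative diagram.
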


\begin{proof}
Consider the following diagram,
\begin{center}
\begin{tikzcd}
&\Delta\arrow[hook]{r}\arrow{d}{\varphi}
&L \arrow{d}{v}\arrow[bend left=80]{dd}[swap]{v'}\arrow[bend left=80]{ddd}{v}\\
\Xi\arrow[hook]{r}\arrow[hook, bend right]{dr}
&K\arrow{r}{u}\arrow{d}{w}
&K\prescript{}{\varphi}{+}L\arrow[dotted]{d}{h}\\
&K'\arrow{r}{u'}\arrow{d}{w^{-1}}
&K'+L\arrow[dotted]{d}{h'}\\
&K\arrow{r}{u}
&K\prescript{}{\varphi}{+}L
\end{tikzcd}
\end{center}
where the two diagrams:
\begin{center}
\begin{tikzcd}
\Delta\arrow[hook]{r}\arrow{d}{\varphi}
&L \arrow{d}{v}\\
K\arrow{r}{u}
&K\prescript{}{\varphi}{+}L
\end{tikzcd}
and
\begin{tikzcd}
\Delta\arrow[hook]{r}\arrow[hook]{d}
&L \arrow{d}{v'}\\
K'\arrow{r}{u'}
&K'+L
\end{tikzcd}
\end{center}
are pushouts. We have that $h'h=\mj_{K\prescript{}{\varphi}{+}L}$ as in the proof of Lemma~\ref{lema1} and $hh'=\mj_{K'+L}$ is proved analogously. That $h$ underlies the $\Xi$-equivalence between $K\prescript{}{\varphi}{+}L$ and $K'+L$ follows from the commutativity of the triangle and the square in the middle of the initial diagram.
\end{proof}

Let now $K$, $K'$ and $L$ be respectively $S^3_--\overset{\circ}{T}$, $M-\overset{\circ}{T}$ and $T$, and let $\Xi=\partial_-C$. Denote by $w$ the homeomorphism underlying the $\Xi$-equivalence between $S^3_--\overset{\circ}{T}$ and $M-\overset{\circ}{T}$, which is guaranteed by Corollary~\ref{torus2}. According to Lemma~\ref{lema2}, we have that $(S^3_--\overset{\circ}{T})\prescript{}{\varphi}{+} T$ and $(M-\overset{\circ}{T})\cup T$, which is $M$, are $\Xi$-equivalent and the homeomorphism for sewing the solid torus $T$ back to $S^3_--\overset{\circ}{T}$ is the identity on its three sides containing respectively $CD$, $DE$, $BE$ (see Figure~\ref{hirurgija1}) and it is the inverse of the Dehn twist $\theta$ (see Figure~\ref{Dehn}) at the level containing $BC$ (see the last sentence of Corollary~\ref{torus2} and the last sentence in the second paragraph after Theorem~\ref{Lickorish}).

Consider the meridian $m$ consisting of the line segments $BC$, $CD$, $DE$ and $EB$ of the solid torus $T$, which is marked in blue at the left-hand side of Figure~\ref{meridian}. Reasoning as above, we conclude that this meridian is mapped by $\varphi$ into the curve $\mu$ illustrated at the right-hand side of Figure~\ref{meridian}.

\begin{figure}[h!]
    \centerline{\includegraphics[width=0.9\textwidth]{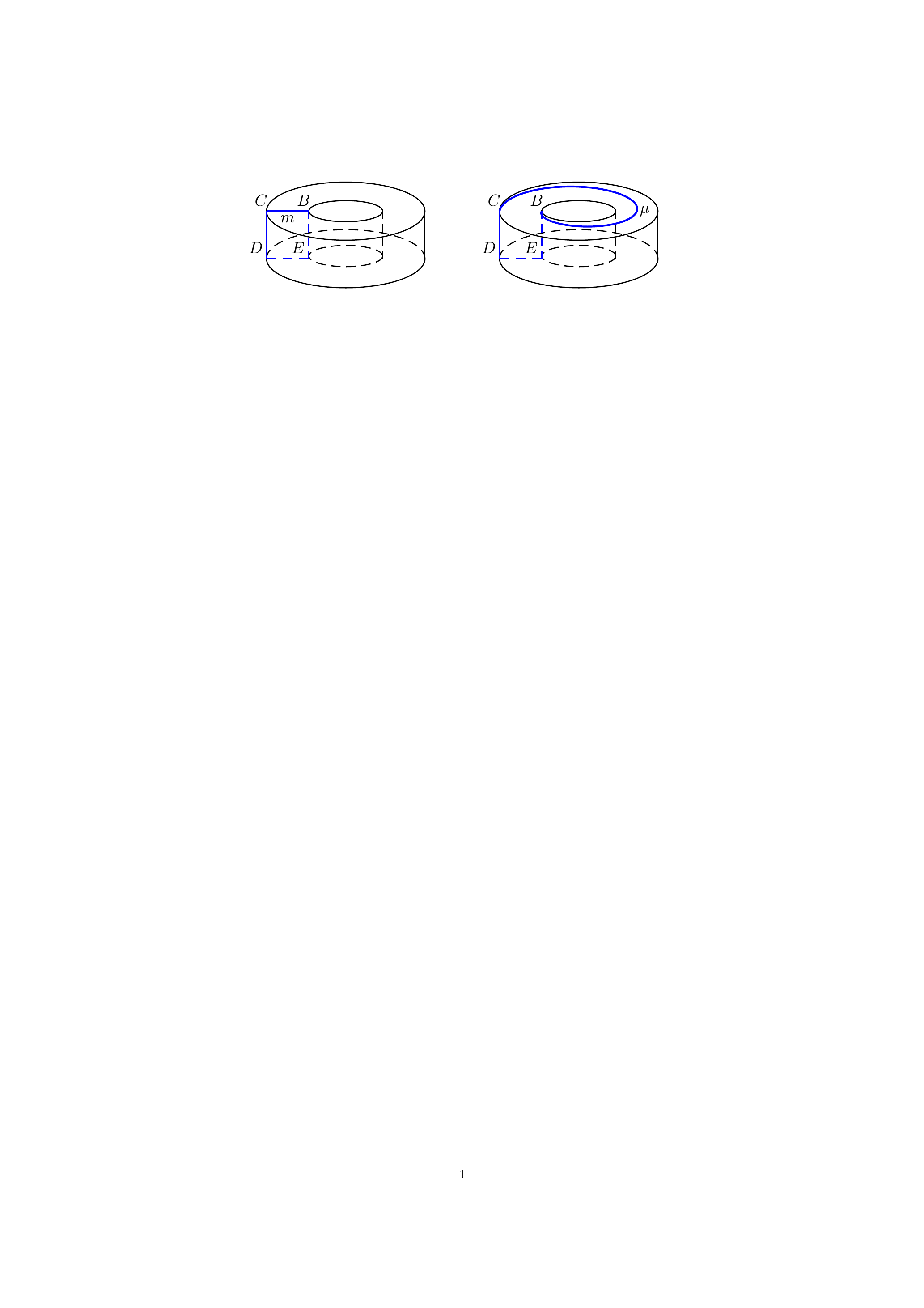} } \caption{Meridian and its image}\label{meridian}
\end{figure}

By \ref{sim}, we know that the solid torus $T$ is sewed back to $S^3_--\overset{\circ}{T}$ in order to obtain $M$ so that its meridian $m$ is sewed back to the curve $\mu$. We call $\mu$ the \emph{attaching curve}.

\begin{rem}\label{sewing meridian}
Sewing back a solid torus to a manifold with such a torus removed depends only on sewing back its meridional disk. This is because one can perform this action in two steps: first sew back a regular neighbourhood of the meridional disk, and then sew back the rest, which is a ball. The result of the second step does not depend on the chosen gluing homeomorphism of the boundary sphere, since all the orientation preserving homeomorphisms of $S^2$ are isotopic, and one could apply Remark~\ref{isotopy gluing} or just use the fact that all homeomorphisms of $S^2$ extend over $D^3$.
\end{rem}

The solid torus $T$ in $S^3_-$ is completely determined by its core $\gamma$ (as the closure of a tubular neighbourhood of $\gamma$), hence the transformation of $S^3_-$ into $M$ is completely determined by the link consisting of $\gamma$ and the attaching curve $\mu$. Assume that the two components of this link are \emph{codirected} (given two orientation vectors on $\gamma$ and $\mu$ at points of the same meridional disk, their scalar product must be positive). Since $\gamma$ and $\mu$ are homologous in $T$, the core $\gamma$ and the linking number ${\rm lk}(\mu,\gamma)$ are sufficient to describe the transformation of $S^3_-$ into $M$. This is a \emph{framed link} description of $M$. In our case, the framed link description is given by one circle corresponding to $\gamma$ with framing 1.

\begin{figure}[H]
    \centerline{\includegraphics[width=0.9\textwidth]{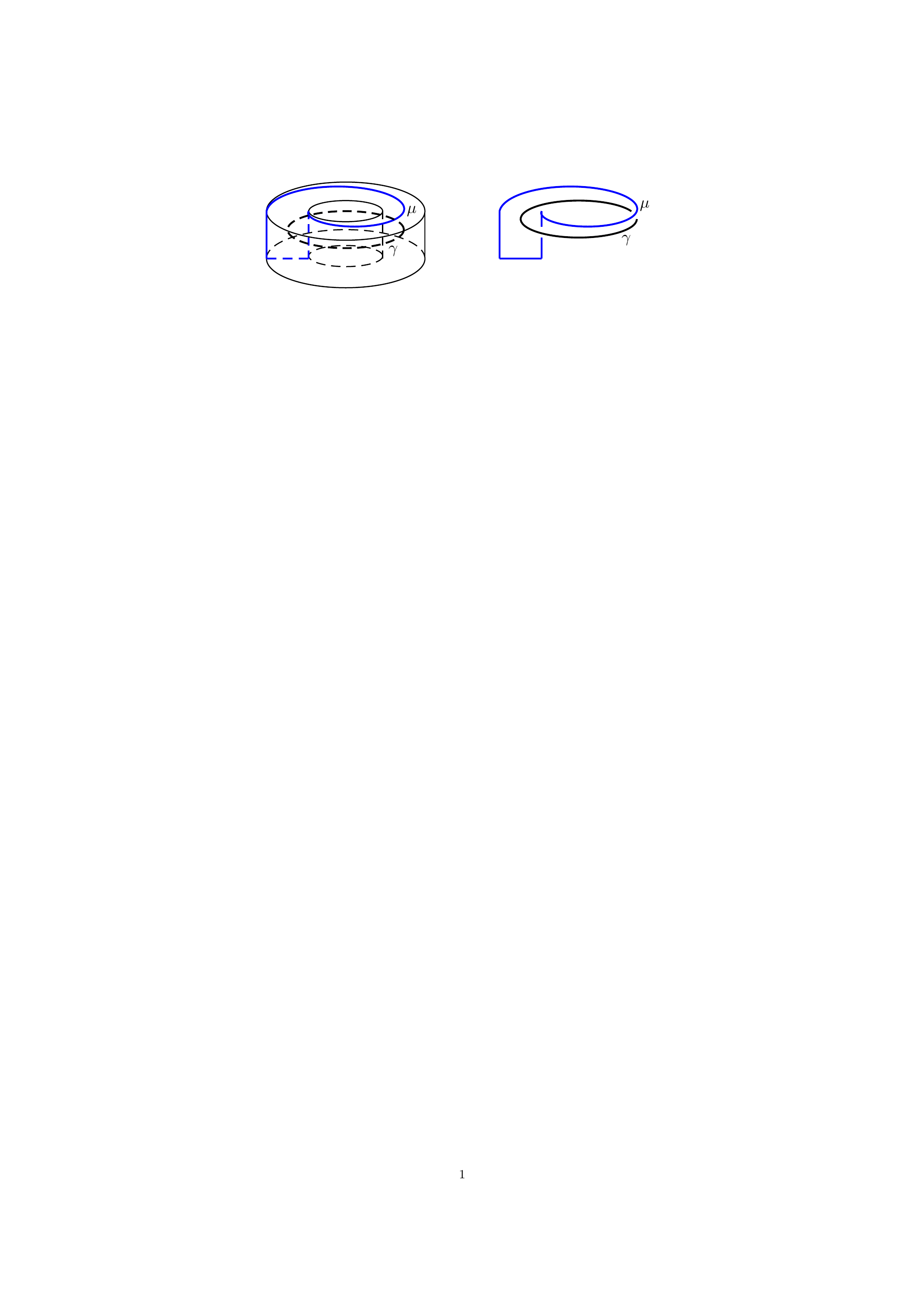} } \caption{Surgery data}\label{surgery data}
\end{figure}

In a more involved case, when $\theta$ in $C+_\theta H$ is a composition $\delta_n\circ\ldots\circ\delta_1$, for $n>1$, of Dehn twists, we proceed as follows. Start with a blade $ABCDE$ whose \emph{edges} $AB$, $BC$, $CD$, $DE$, $EB$ are all of length $\varepsilon$. Let $A$ slide along the curve in $\Sigma$ corresponding to $\delta_n$ as before. Let $T_n$ be the solid torus removed in this step. By Lemma~\ref{lema1}, we conclude that $C+_{\delta_{n-1}\circ\ldots\circ\delta_1}(H-\overset{\circ}{T_n})$ and $C+_{\delta_n\circ\ldots\circ\delta_1}(H-\overset{\circ}{T_n})$, which is $M-\overset{\circ}{T_n}$, are $\Xi$-equivalent.

Repeat this procedure with the blade whose sides are of length $\varepsilon/3$ and $A$ slides along the curve corresponding to $\delta_{n-1}$. Let $T_{n-1}$ be the solid torus removed in this step. Again, by Lemma~\ref{lema1} we have that $C+_{\delta_{n-2}\circ\ldots\circ\delta_1}(H-(\overset{\circ}{T_n}\cup \overset{\circ\mbox{\hspace{1.2em}}}{T_{n-1}}))$ and $M-(\overset{\circ}{T_n}\cup \overset{\circ\mbox{\hspace{1.2em}}}{T_{n-1}})$ are $\Xi$-equivalent.

After $n$ such steps we have that $S^3_--(\overset{\circ}{T_n}\cup\ldots\cup \overset{\circ}{T_1})$ and $M-(\overset{\circ}{T_n}\cup\ldots\cup \overset{\circ}{T_1})$ are $\Xi$-equivalent. By applying Lemma~\ref{lema2}, for a gluing homeomorphism $\varphi$ calculated at each component $\partial T_i$ as in the previous case, we have the following result.

\begin{prop}\label{surgery language}
Every manifold $M=C+_\theta H$ is $\partial$-equivalent to $(S^3_--(\overset{\circ}{T_n}\cup\ldots\cup \overset{\circ}{T_1}))\prescript{}{\varphi}{+} (T_n\cup\ldots\cup T_1)$, for some solid tori $T_1,\ldots T_n$ in $H$ and some gluing homeomorphism $\varphi$.
\end{prop}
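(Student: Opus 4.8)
The plan is to reduce the general gluing map $\theta$ to a composition of Dehn twists, realise each twist geometrically by carving a solid torus out of $H$, peel the twists off one at a time into automorphisms of the punctured handlebody, and finally glue the tori back. First I would invoke Theorem~\ref{Lickorish} to write $\theta$, up to isotopy, as $\delta_n\circ\cdots\circ\delta_1$ for Dehn twists $\delta_i$ along simple closed curves in $\Sigma$. By Remark~\ref{isotopy gluing}, passing to an isotopic representative does not change the $\Xi$-equivalence class of $C+_\theta H$, so I may assume $\theta=\delta_n\circ\cdots\circ\delta_1$ on the nose.

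Next I would set up the single-twist building block exactly as in the blade construction preceding the statement. For a twist $\delta$ along $l\subseteq\Sigma$, the blade swept along $l$ cuts a solid torus $T$ out of $H$; setting $N=H-\overset{\circ}{T}$, one obtains an automorphism $h\colon N\to N$ that is the identity outside the collar between $T$ and $\Sigma$ and performs $\delta$ level by level inside it, so that $h|_\Sigma=\delta$. This is precisely the hypothesis of Corollary~\ref{torus2}: the twist extends to an automorphism of $N$, so $C+N$ and $C+_\delta N$ are $\Xi$-equivalent, the underlying homeomorphism being the identity on $C$ and $h$ on $N$.

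The third step is the inductive peeling. For the outermost twist I would remove $T_n$ and apply Lemma~\ref{lema1} with $N=H-\overset{\circ}{T_n}$ to transfer one twist from the gluing map into $h$, obtaining that $C+_{\delta_{n-1}\circ\cdots\circ\delta_1}(H-\overset{\circ}{T_n})$ is $\Xi$-equivalent to $C+_{\delta_n\circ\cdots\circ\delta_1}(H-\overset{\circ}{T_n})=M-\overset{\circ}{T_n}$. I would then repeat with $\delta_{n-1}$, using a strictly smaller blade (sides $\varepsilon/3$) so that the new torus $T_{n-1}$ is disjoint from $T_n$, and continue until all $n$ twists are exhausted, leaving $S^3_--(\overset{\circ}{T_n}\cup\cdots\cup\overset{\circ}{T_1})$ $\Xi$-equivalent to $M-(\overset{\circ}{T_n}\cup\cdots\cup\overset{\circ}{T_1})$. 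Finally I would sew the tori back via Lemma~\ref{lema2}, taking $L=T_n\cup\cdots\cup T_1$, $\Delta=\partial T_n\cup\cdots\cup\partial T_1$, and $\varphi=w^{-1}|_\Delta$ for $w$ the $\Xi$-equivalence just produced; this yields the $\Xi$-equivalence of $(S^3_--(\overset{\circ}{T_n}\cup\cdots\cup\overset{\circ}{T_1}))\prescript{}{\varphi}{+}(T_n\cup\cdots\cup T_1)$ with $(M-(\overset{\circ}{T_n}\cup\cdots\cup\overset{\circ}{T_1}))\cup(T_n\cup\cdots\cup T_1)=M$. Since $\Xi=\partial M$ is the common boundary, $\Xi$-equivalence here is $\partial$-equivalence, which is the assertion.

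The step I expect to be the main obstacle is the disjointness and compatibility of the carved tori across the induction. Shrinking the blade at each stage keeps the $T_i$ pairwise disjoint, but one must still verify that after removing $T_n,\ldots,T_{i+1}$ the curve for $\delta_i$ together with its collar survives intact in the remaining handlebody, so that the single-twist block applies verbatim with $N=H-(\overset{\circ}{T_n}\cup\cdots\cup\overset{\circ}{T_i})$ and every invocation of Lemma~\ref{lema1} is legitimate. A secondary point is orientation bookkeeping: checking that each $h$ is orientation preserving, as Corollary~\ref{torus2} and Lemma~\ref{lema1} require, and that the level-wise twists compose in the correct order to realise $\delta_n\circ\cdots\circ\delta_1$ rather than some permutation of it.
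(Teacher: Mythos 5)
Your proposal is correct and follows essentially the same route as the paper: reduction of $\theta$ to a composition of Dehn twists via Theorem~\ref{Lickorish} and Remark~\ref{isotopy gluing}, the blade construction with successively smaller blades to carve out disjoint solid tori, Corollary~\ref{torus2} and Lemma~\ref{lema1} to peel the twists off into automorphisms of the punctured handlebody, and Lemma~\ref{lema2} to sew the tori back along $\varphi=w^{-1}|_\Delta$. The only difference is presentational: you make explicit the initial appeal to Lickorish's theorem and the isotopy remark, which the paper leaves implicit.
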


A diagram of our graphical language is placed in $S^3$, and it consists of a finite collection of unlinked and unknotted wedges of circles and a framed link. (A wedge of zero circles is just a point.) We present these diagrams as planar (with undercrossings and overcrossings) like the one given in Figure~\ref{dijagram1}, and we call them $\mathbb{Z}$-\emph{diagrams}.

One does not distinguish two $\mathbb{Z}$-diagrams differing by Reidemeister moves applied to framed links (the wedges of circles must remain fixed since we are interested in $\partial$-equivalence of manifolds). We call such $\mathbb{Z}$-diagrams \emph{Reidemeister equivalent}. This is because we consider presentations in $S^3$ up to isotopy that fixes the wedges of circles.

The \emph{interpretation} of a $\mathbb{Z}$-diagram as a manifold is as at the beginning of the section. Directly from Proposition~\ref{surgery language} we have the following.

\begin{cor}
Every manifold together with the canonical identification of its boundary is presentable by a $\mathbb{Z}$-diagram.
\end{cor}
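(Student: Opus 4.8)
The plan is to obtain the corollary directly from Proposition~\ref{surgery language} by reading the surgery description produced there as the data of a $\mathbb{Z}$-diagram. First I would apply \cite[Theorem~3.1.10]{SSS} to write an arbitrary manifold $M$ as $C+_\theta H$, where $C$ is a compression body with $\partial M=\partial_-C$ canonically identified. By Theorem~\ref{Lickorish} the homeomorphism $\theta$ is isotopic to a composition $\delta_n\circ\ldots\circ\delta_1$ of Dehn twists, and by Remark~\ref{isotopy gluing} substituting this composition for $\theta$ does not change the $\partial$-equivalence class of $M$. This places $M$ in precisely the form required by Proposition~\ref{surgery language}.

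Next I would read off the two constituents of a $\mathbb{Z}$-diagram from this description. The embedded compression body exhibits $S^3_-=C\cup H$ as $S^3$ with finitely many unlinked and unknotted handlebodies removed; since each such handlebody is the neighbourhood of a wedge of circles, these wedges furnish the rigid part of the diagram, and removing their neighbourhoods recovers $S^3_-$ exactly as in the interpretation fixed at the start of the section. Proposition~\ref{surgery language} then presents $M$, up to $\partial$-equivalence, as $(S^3_--(\overset{\circ}{T_n}\cup\ldots\cup\overset{\circ}{T_1}))\prescript{}{\varphi}{+}(T_n\cup\ldots\cup T_1)$. By the discussion preceding the proposition, the sewing of each $T_i$ is completely determined by its core $\gamma_i$ together with the linking number ${\rm lk}(\mu_i,\gamma_i)$, that is, by a single framed knot; the disjoint union of these framed knots is the framed link of the diagram.

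Finally I would observe that the wedges of circles together with this framed link form a $\mathbb{Z}$-diagram whose interpretation is, by construction, the surgery manifold of Proposition~\ref{surgery language}, and hence $\partial$-equivalent to $M$. Because every $\partial$-equivalence fixes the common boundary $\Xi=\partial_-C=\partial M$ pointwise, the canonical identification of $\partial M$ carried by the diagram coincides with the given one. I expect the main point to demand care rather than genuine difficulty: one must check that the codirected, unit-framing conventions set up for a single Dehn twist assemble coherently across the $n$ nested blades of decreasing size, so that the framings read off the cores $\gamma_1,\ldots,\gamma_n$ reproduce $\varphi$ on all the boundary tori $\partial T_i$ simultaneously. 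Since this assembly is already performed in the paragraphs leading to Proposition~\ref{surgery language}, the corollary follows with no further topological input.
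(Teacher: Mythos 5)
Your proposal is correct and follows essentially the same route as the paper, which derives the corollary directly from Proposition~\ref{surgery language}: the wedges of circles encode $S^3_-$ and the cores $\gamma_i$ with their linking numbers give the framed link. Your additional care about the Lickorish decomposition, Remark~\ref{isotopy gluing}, and the coherence of the nested blades merely makes explicit what the paper already carried out in the paragraphs preceding the proposition.
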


\section{The calculus}\label{calculus}

Our next step is to introduce a calculus on $\mathbb{Z}$-diagrams consisting of the following three moves (see Figure~\ref{dijagram2}). This calculus has its origins in the work of Kirby, \cite{K78}, Fenn and Rourke, \cite{FR79} and Roberts, \cite{R97}. Since the labels for link components are integers, it is called \emph{integral surgery calculus}.

In what follows, we shall refer to the threads coloured in red, i.e., that belong to the wedges of circles, as \emph{wedge threads}. The change in framing of a link component $C_i$ is indicated in Moves ($-1$) and (1). Note that we assume that the circles with framing -1 and 1 at the right-hand sides of Moves~(-1) and (1) vanish when there are no wedge threads passing through~$C$.

\begin{figure}[h!]
    \centerline{\includegraphics[width=1\textwidth]{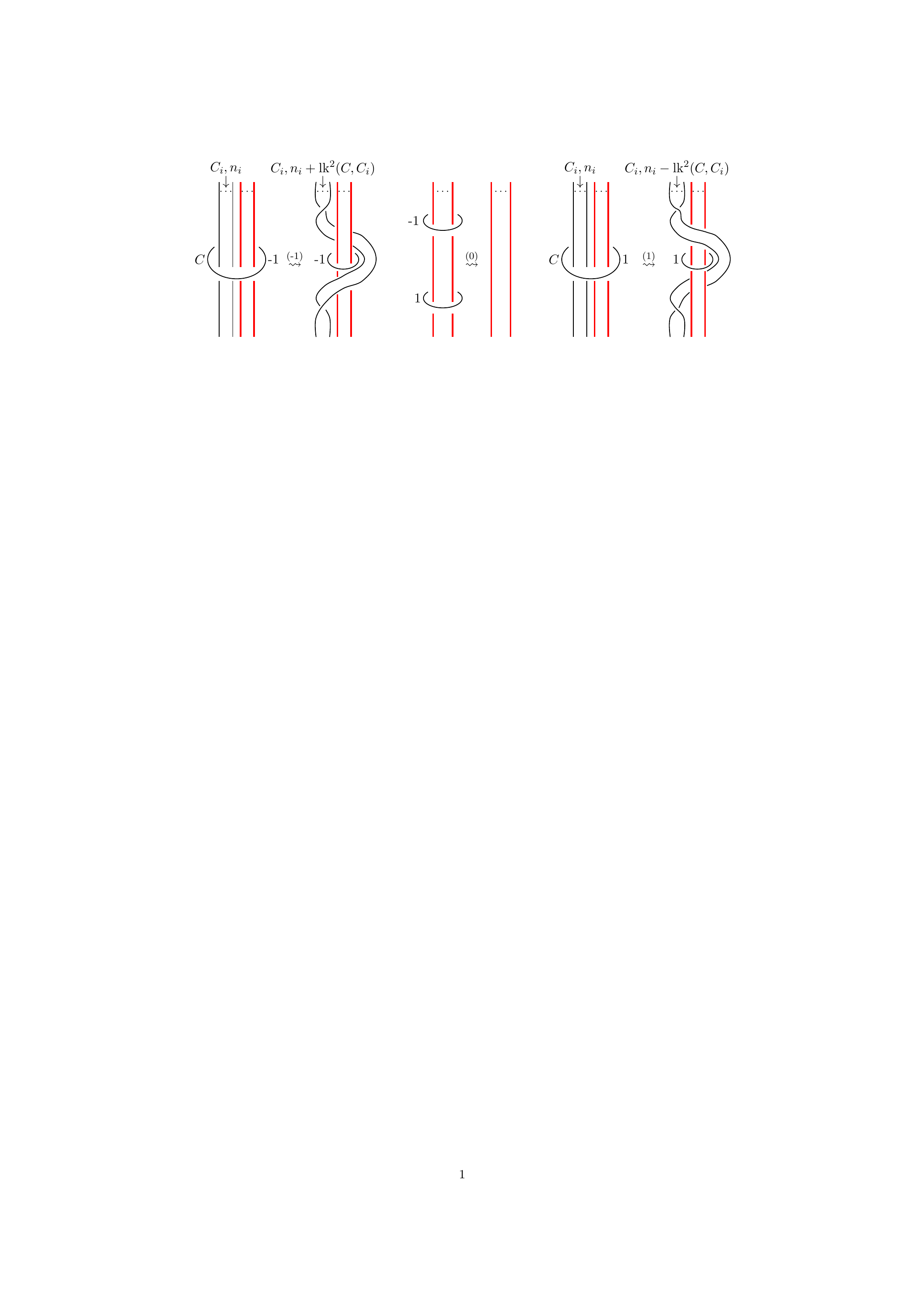}} \caption{Moves (-1), (0) and (1)}\label{dijagram2}
\end{figure}

Although, throughout the paper the moves are directed, we always consider the symmetric closure of the underlying relation on diagrams. We say that two $\mathbb{Z}$-diagrams are $\mathbb{Z}$-\emph{equivalent} if there is a finite sequence of Moves (-1), (0) and (1) and their inverses transforming one into the other. The following result proves that the integral surgery calculus is complete.

\begin{thm}\label{integral calculus}
Two $\mathbb{Z}$-diagrams with identical wedges of circles denote two $\partial$-equivalent manifolds iff they are $\mathbb{Z}$-equivalent.
\end{thm}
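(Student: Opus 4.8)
The plan is to prove the two implications separately. The ``if'' direction (\emph{soundness}) is geometric: I would check that each of the three moves carries a $\mathbb{Z}$-diagram to one denoting a $\partial$-equivalent manifold, so that any finite sequence of moves and their inverses preserves $\partial$-equivalence. The ``only if'' direction (\emph{completeness}) is the substantial one, and I would obtain it by reducing to the completeness of Roberts' calculus \cite{R97} and then translating Roberts' moves into sequences of our Moves $(-1)$, $(0)$ and $(1)$, in the spirit of the Fenn--Rourke adaptation \cite{FR79}.

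For soundness, note first that since both diagrams carry identical wedges, the ambient manifold $S^3_-$ in which the surgery is performed is the same for both, and its boundary is exactly the boundary of the presented manifolds. Move $(0)$ changes no framings (as the label suggests, in contrast to Moves $(\pm1)$), so it is a handle slide; it may be carried out in the interior of $S^3_-$, does not alter the surgered manifold, and the underlying homeomorphism is the identity near $\partial S^3_-$, hence is a $\partial$-equivalence. For Moves $(\pm1)$ there are two cases. When no wedge threads pass through the distinguished component $C$, the residual $\pm1$-framed circle blows down and the move is the classical Fenn--Rourke twist on ordinary link strands, which preserves the surgered manifold rel boundary. When wedge threads do pass through $C$, the threads are rigid and cannot be twisted, so the full twist is recorded by the residual $\pm1$-framed circle. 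This is precisely the situation analysed in Section~\ref{diagrammatics}: surgery on that circle realises a Dehn twist along $\Sigma$, and by Lemma~\ref{lema1} and Corollary~\ref{torus2} composing the gluing homeomorphism with a Dehn twist that extends to an automorphism $h$ of the handlebody side yields a manifold $\Xi$-equivalent to the original, with $\Xi=\partial_-C$ the genuine boundary. In all cases the two diagrams denote $\partial$-equivalent manifolds.

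For completeness, suppose two $\mathbb{Z}$-diagrams with identical wedges present $\partial$-equivalent manifolds. Deleting the common wedge neighbourhoods exhibits each diagram as surgery data on a framed link in the fixed manifold $S^3_-$, and $\partial$-equivalence says exactly that the two surgeries yield homeomorphic results fixing $\partial S^3_-$ pointwise. Roberts' completeness theorem \cite{R97} then supplies a finite sequence of Roberts' moves in $S^3_-$ taking one surgery presentation to the other, and it remains to realise each such move by Moves $(-1)$, $(0)$, $(1)$ and their inverses, up to Reidemeister equivalence (isotopy fixing the wedges). I would match them as follows: Roberts' handle slides correspond to Move $(0)$; his blow-ups correspond to Moves $(\pm1)$ applied where only link strands, and no wedge threads, pass through; and his moves accounting for the boundary, i.e.\ the Dehn-twist data along $\partial S^3_-$ governed by Theorem~\ref{Lickorish}, correspond to Moves $(\pm1)$ applied where wedge threads pass through, with the residual $\pm1$-circle recording the induced boundary twist.

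The main obstacle is this last translation, together with the verification that the equivalence delivered by Roberts' theorem is exactly $\partial$-equivalence. Two points need care. First, one must match ``homeomorphic rel boundary'' with the pointwise-fixing condition of $\partial$-equivalence; Remark~\ref{isotopy gluing} and Corollary~\ref{torus2} are the tools that let boundary behaviour be absorbed into the surgery data rather than into the homeomorphism. Second, and more delicately, the wedge threads are rigid boundary data: unlike ordinary link strands they may not be isotoped or twisted, so every Roberts move that twists across the boundary must be re-expressed through the residual $\pm1$-framed circle. Verifying that the bookkeeping of framing changes, linking numbers and the codirectedness convention built into Moves $(\pm1)$ reproduces \emph{exactly} the effect of the corresponding Roberts boundary move---so that no equivalence is either lost or spuriously introduced---is the technical heart of the argument.
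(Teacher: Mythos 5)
Your proposal is correct and follows essentially the same route as the paper's proof: soundness is established by geometrically justifying each move via twist homeomorphisms (with the wedge threads kept rigid and compensated by the residual circle, exactly as in the paper's well/twist argument), and completeness by invoking Roberts' theorem \cite{R97} and deriving his moves R1--R3 from Moves $(-1)$, $(0)$ and $(1)$. The only discrepancy is in your guessed dictionary: in the paper Move $(0)$ is justified by a pair of cancelling twist homeomorphisms rather than being a handle slide, and Roberts' handle-slide move R2 corresponds to no single move of the calculus but is derived by a multi-step combination of all three moves together with an unknotting trick, while R1 is the move covered directly by Moves $(\pm 1)$---precisely the bookkeeping you flagged as the technical heart.
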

\begin{proof}
$(\Leftarrow)$ Let $M$ be a manifold denoted by a $\mathbb{Z}$-diagram containing a fragment of the form of the left-hand side of Move (-1). In order to have a clear picture we start with a concrete example of this fragment given in Figure~\ref{dijagram5}
\begin{figure}[h!]
    \centerline{\includegraphics[width=.3\textwidth]{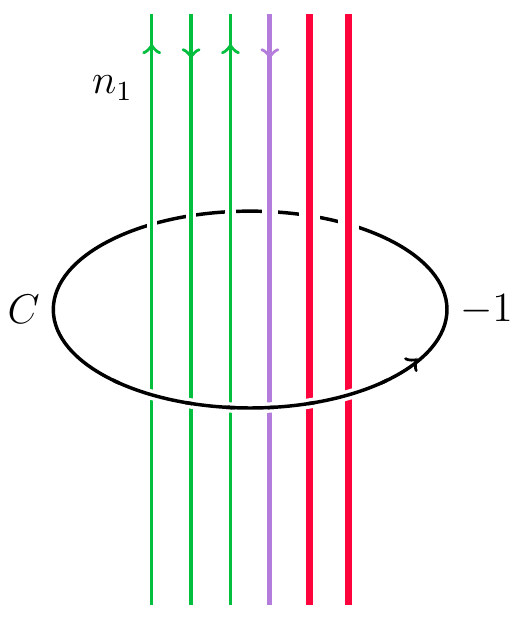} } \caption{}\label{dijagram5}
\end{figure}
where the same colour denotes the same link component (also, the link components are oriented, which serves for counting linking numbers and the choice of orientation does not effect the resulting framing).  If the linking number of an oriented thread with $C$ is equal to 1, then we call it \emph{outgoing} thread. Otherwise, it is \emph{ingoing} thread. The first and the third green thread in Figure~\ref{dijagram5} are outgoing, while the second green thread and the purple thread are ingoing threads.

The component $C$ of the link describes an unknotted solid torus $T$ in $S^3$ whose core is $C$. By removing $T$ and sewing back its meridian along a curve $C'$ having ${\rm lk}(C',C)=-1$ we perform one step of the surgery leading from $S^3_-$ to $M$.

Consider the cylinder $W=D^2\times I\subseteq S^3-\overset{\circ}{T}$, which is attached to $\partial T$ along $S^1\times I$ . The \emph{twist homeomorphism} of $W$,
\[
(x,t)\mapsto(xe^{2\pi it},t)
\]
extended by the identity of $S^3-(\overset{\circ}{T}\cup W)$ defines an automorphism of ${S^3-\overset{\circ}{T}}$.

In other words, imagine that $W$ makes a well in $S^3$. Form a cut at the top level of this well (highlighted in Figure~\ref{twist}) and make a twist so that this level is rotated for $360^\circ$ counterclockwise and the bottom level of the well is fixed. The spiral in the left-hand side picture is mapped into the vertical line segment connecting the same endpoints, which is a part of the blue meridian in the right-hand side picture.

\begin{figure}[h!]
    \centerline{\includegraphics[width=0.9\textwidth]{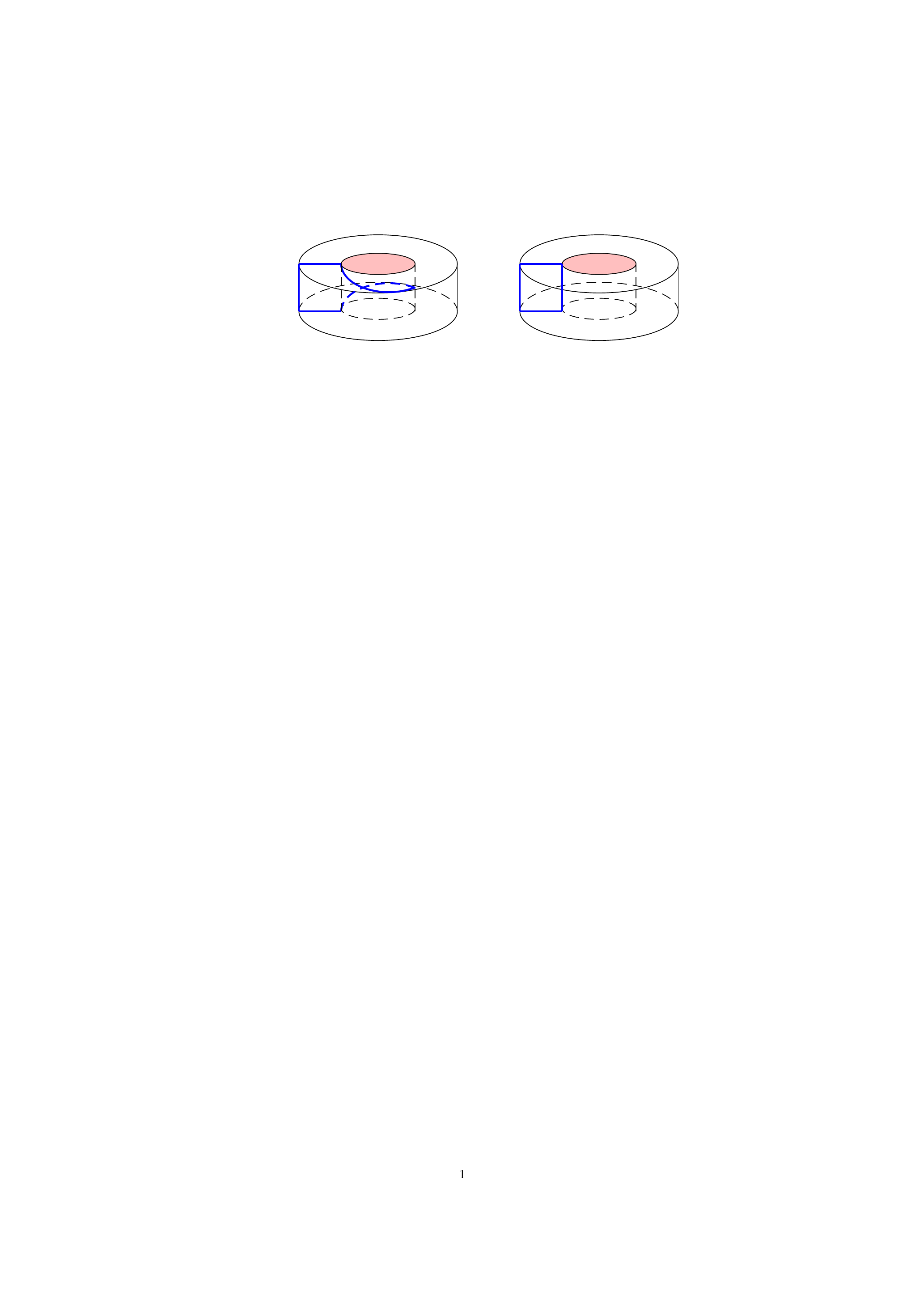} } \caption{Twist homeomorphism}\label{twist}
\end{figure}

This means that after performing all the other surgeries, the solid torus $T$ should be sewed back so that its meridian is identified with itself. Hence, this surgery is trivial and could be omitted. It remains to see what has happened inside the well. Figure~\ref{dijagram6}a illustrates the change in this region, and Figure~\ref{dijagram6}b adds an isotopy that separates
the upper part where the wedge threads are fixed and the lower part where they are twisted.
\begin{figure}[h!]
    \centerline{\includegraphics[width=.6\textwidth]{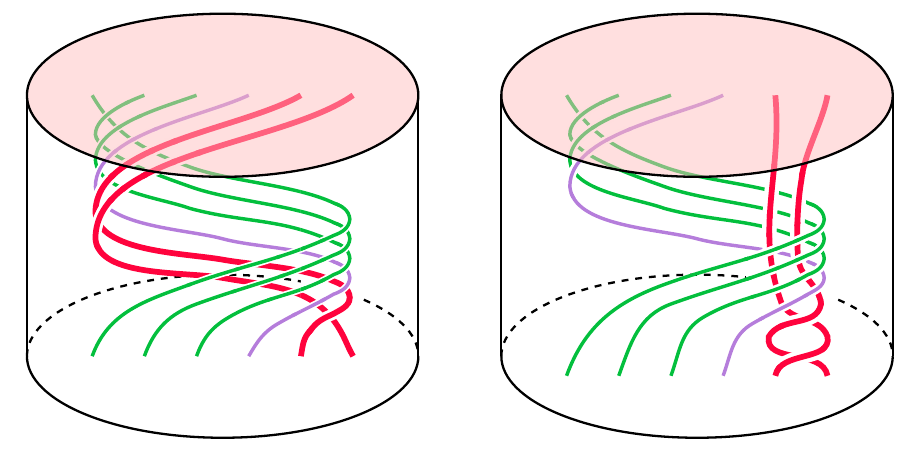}
    } \caption{a) and b)}\label{dijagram6}
\end{figure}

Take a look for a moment at Figure~\ref{dijagram22}, where the desired result of Move {(-1)} applied to Figure~\ref{dijagram5} is illustrated.
\begin{figure}[h!]
    \centerline{\includegraphics[width=.4\textwidth]{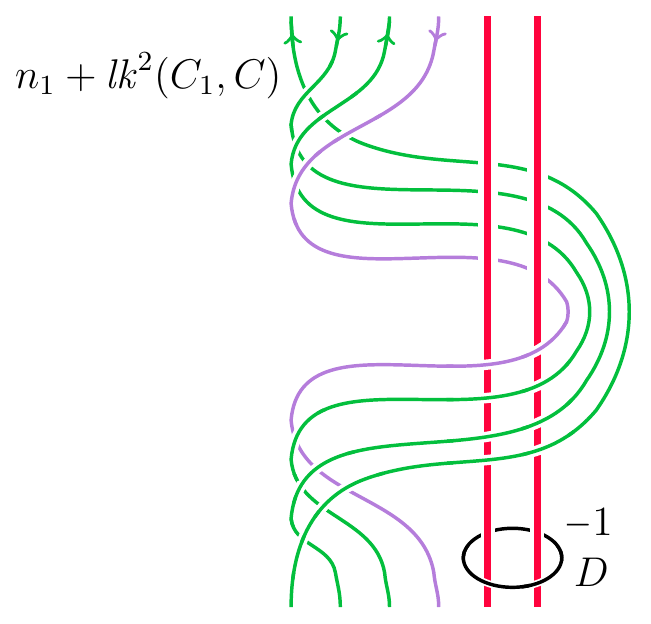}  } \caption{}\label{dijagram22}
\end{figure}
After performing the twist homeomorphism with respect to the component $D$ in Figure~\ref{dijagram22}, and by neglecting the change of framing for other components of the link, one can easily see that it corresponds to Figure~\ref{dijagram6}b. It remains to check the change of framing tied to the components passing through $C$.

Let us count the new framing of the green component of the link whose initial framing was $n_1$. Denote this component by $C_1$. Assume that  before applying the twist homeomorphism, the black line segments in Figure~\ref{dijagram8}a (codirected with the green line segments) belong to the attaching curve tied to $C_1$. The situation after the twist homeomorphism is illustrated in Figure~\ref{dijagram8}b.
\begin{figure}[H]
    \centerline{\includegraphics[width=.6\textwidth]{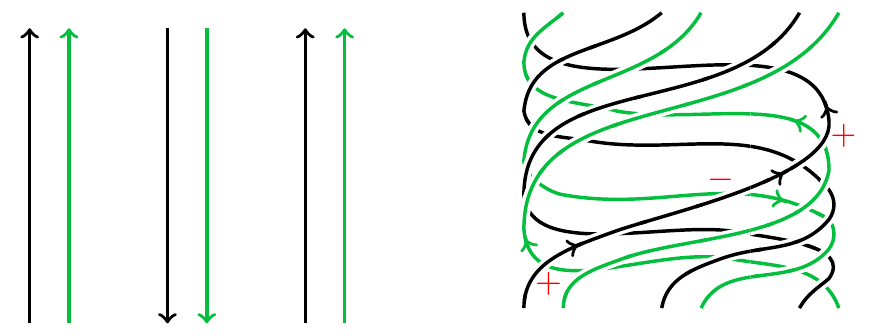}
    } \caption{a) and b)}\label{dijagram8}
\end{figure}

The linking number between the black and the green component (which contributes to the framing of $C_1$) has changed by
\[
O\cdot(O-I)+ I\cdot(I-O)=(O-I)^2
\]
where $O$ is the number of outgoing threads of $C_1$ while $I$ is the number of ingoing threads of $C_1$. In Figure~\ref{dijagram8}b, the leftmost black (outgoing) thread overcrosses three green threads and the linking number increases by 1 in the case when it overcrosses the outgoing green thread, and decreases by one otherwise. Since $O-I={\rm lk}(C_1,C)$,  the new framing of $C_1$ is $n_1+{\rm lk}^2(C_1,C)$.

This finishes the justification of Move (-1). We proceed analogously in the case of Move (1), and the case of Move (0) is trivial since the corresponding twist homeomorphisms cancel each other.

\vspace{2ex}
\noindent$(\Rightarrow)$ For this direction we rely on \cite[Theorem~1]{R97}. Our notion of $\partial$-equivalent manifolds corresponds to the boundary condition mentioned in the third sentence of the second paragraph in \cite{R97}. The diagrammatic calculus for manifolds would be different if we classify them only by homeomorphism not respecting boundaries as it was shown in example illustrated in Figure~\ref{d100}.

Theorem~1 from \cite{R97} claims that the moves given in Figure~\ref{dijagram10} suffice for showing that two diagrams denote $\partial$-equivalent manifolds. Move R1 is supported in a ball non-intersecting any other wedge or link component of the diagram, while R2 and R3 are supported in any embedded image (with the same non-intersecting property) of genus-2 handlebody and solid torus, respectively.
\begin{figure}[h!]
    \centerline{\includegraphics[width=.7\textwidth]{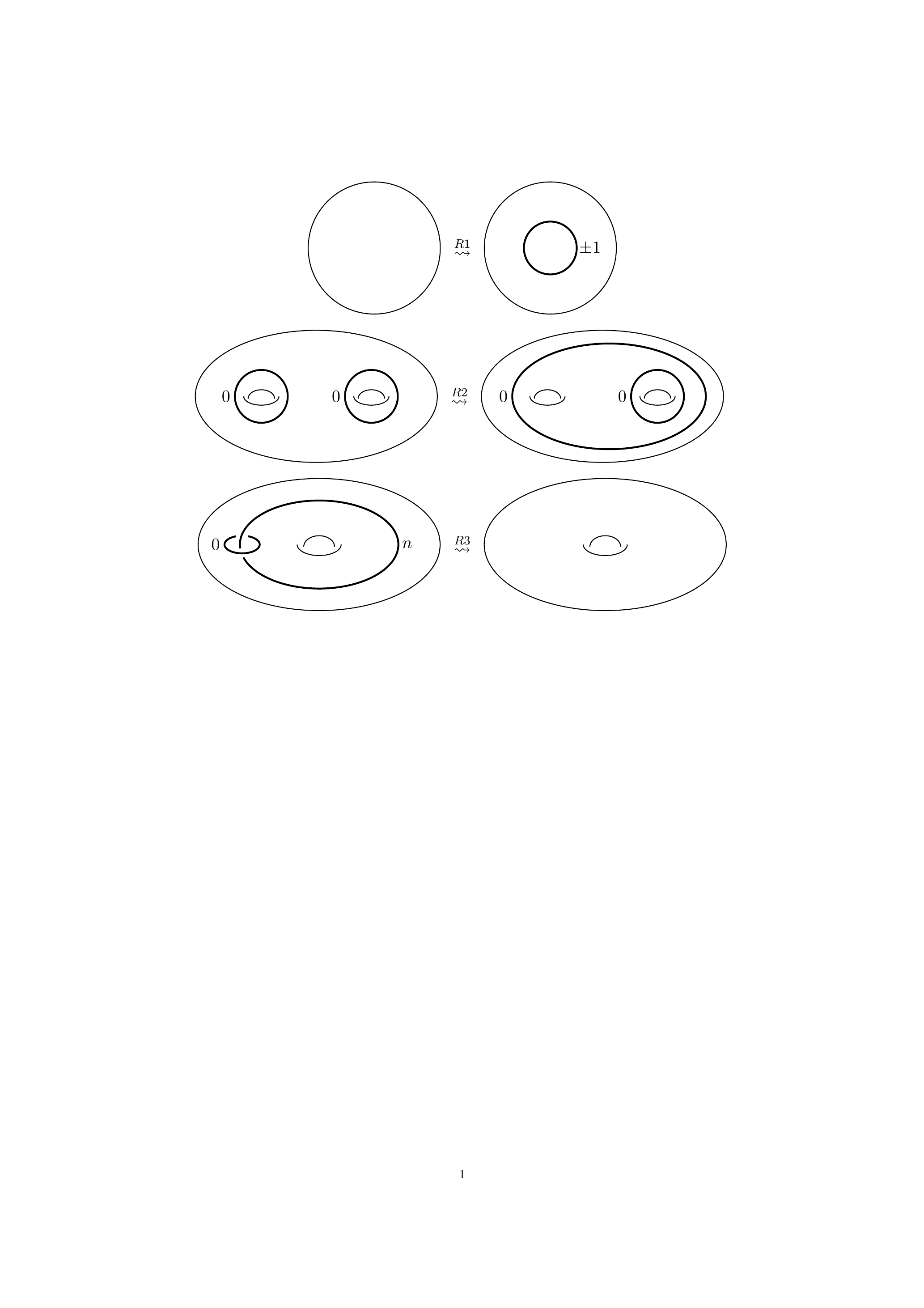}}
    \caption{Roberts' moves }\label{dijagram10}
\end{figure}

Move R{1} is trivially covered by Moves (-1) and (1). For Move~R{2}, with the help of ``unknotting move'' illustrated at the left-hand side of Figure~\ref{dijagram69} it suffices to show that Move R$2'$ from Figure~\ref{dijagram69} is derivable from Moves (-1), (0) and (1). (Note that the unknotting move can be performed so that the framing of the knotted component remains unchanged---introduce the circle with framing -1 so that one of the threads is ingoing and the other is outgoing.)
\begin{figure}[h!h!h!]
    \centerline{\includegraphics[width=1\textwidth]{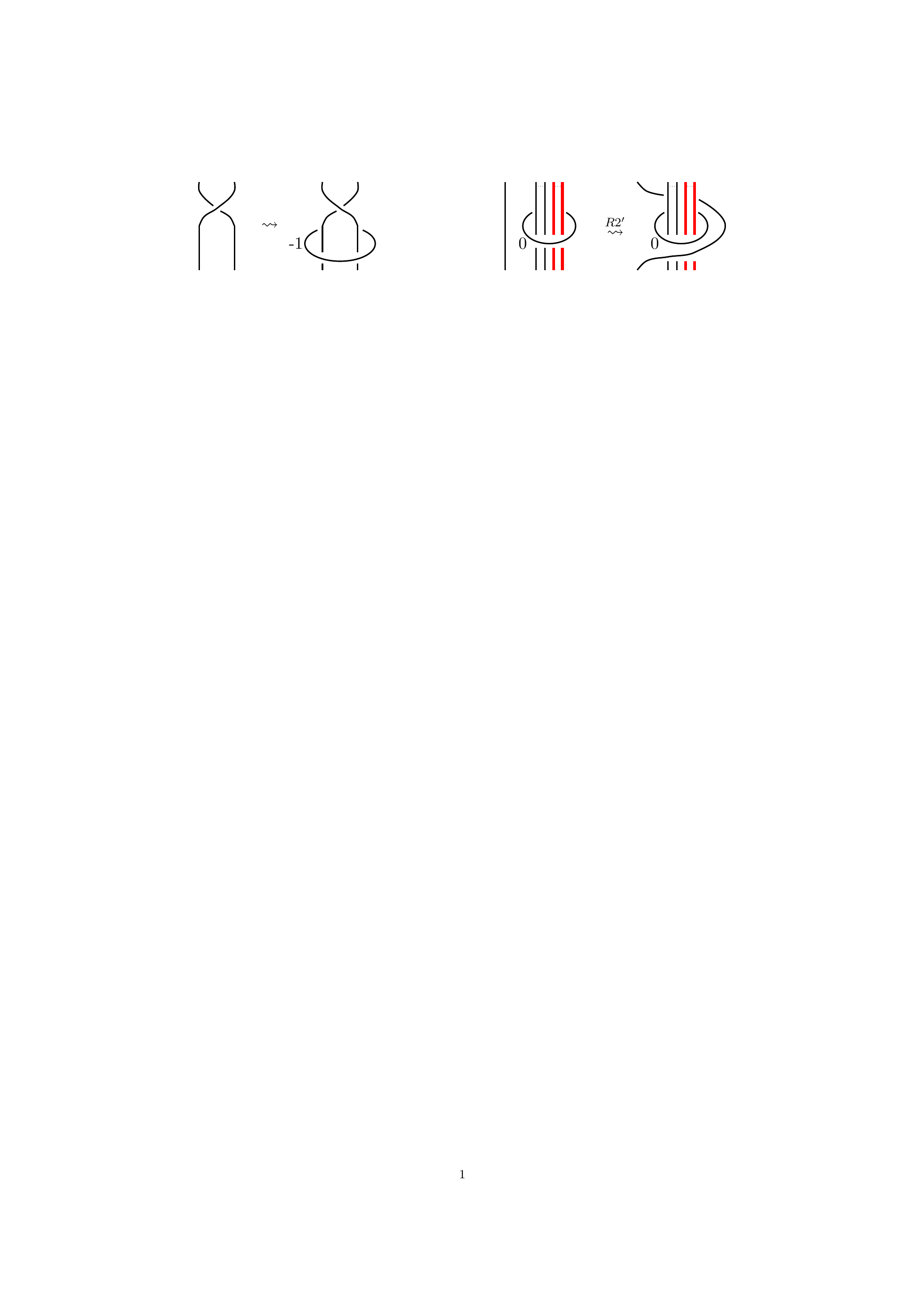}
    } \caption{Unknotting move and R$2'$}\label{dijagram69}
\end{figure}

By Reidemeister's move 1 applied to the leftmost thread, Move R$2'$ follows from the move derived in Figure~\ref{dijagram23}.

\begin{figure}
    \centerline{\includegraphics[width=1\textwidth]{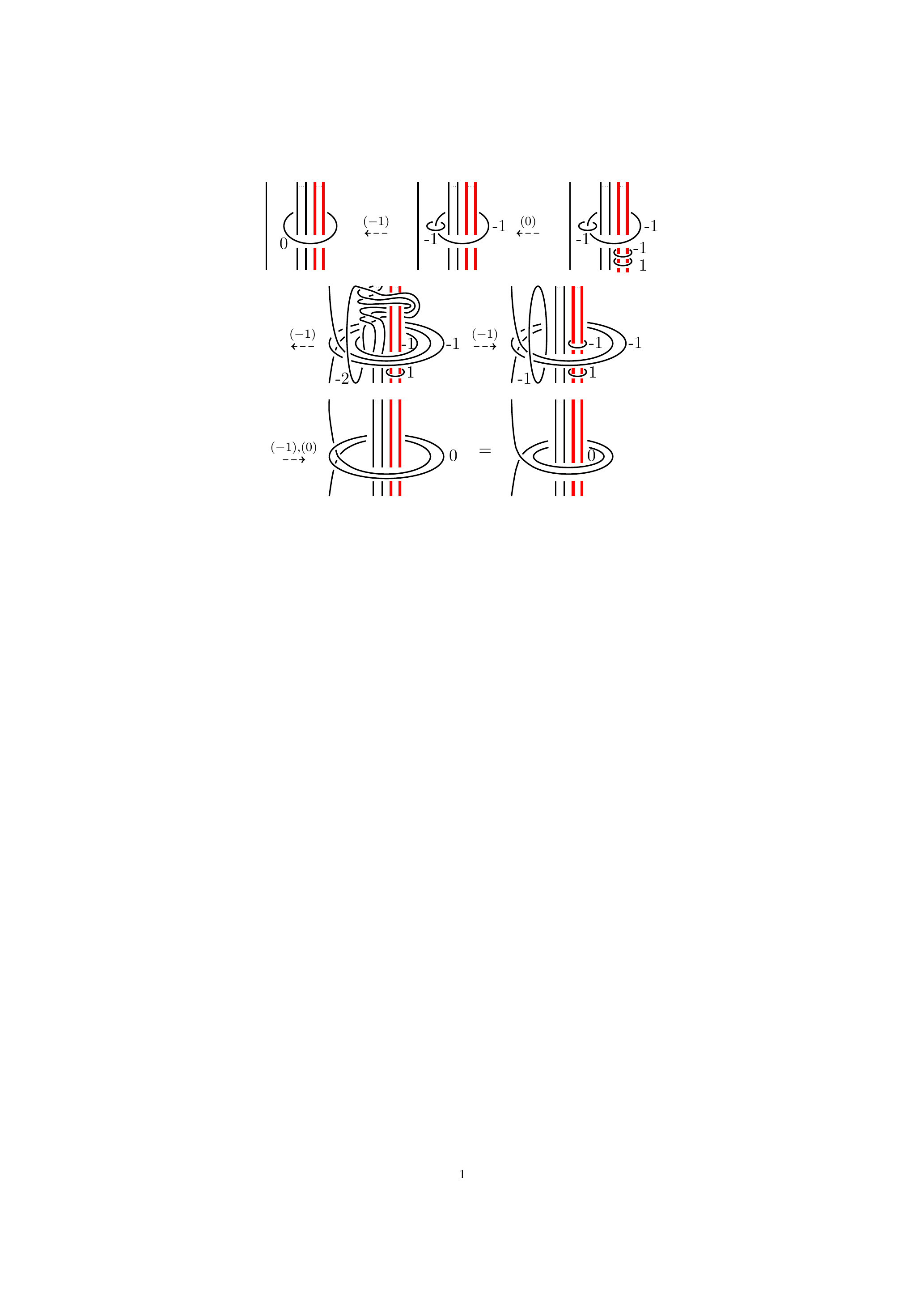}
    } \caption{Derivation of R2 (some steps require isotopy)}\label{dijagram23}
\end{figure}
\noindent Note that during this derivation, we do not register the change of framing of the threads. It depends on whether the leftmost thread belongs to a link component passing through the circle with framing 0. However, it is easy to see that the terminal value of each framing is the same as the initial.
\vspace{1ex}

For Move~R{3}, we start with reducing it to the case $n=-1$. This is done by iterating the procedure illustrated in Figure~\ref{dijagram13}.
\begin{figure}[H]
    \centerline{\includegraphics[width=1\textwidth]{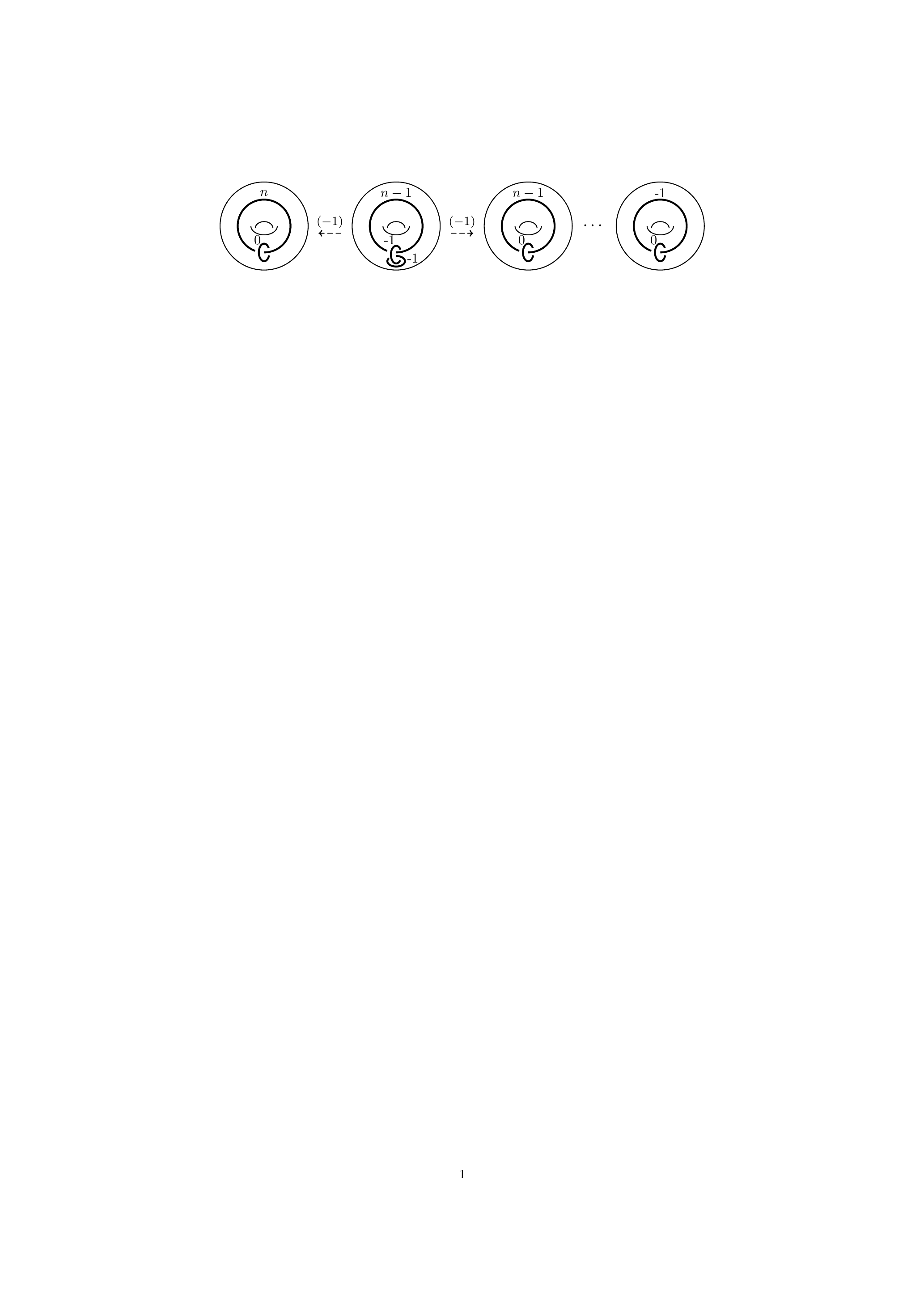}}
    \caption{Reducing R3 to the case $n=-1$, for $n\geq0$}\label{dijagram13}
\end{figure}

Again, by applying the unknotting move, the derivation of the reduced case of R3 from Moves (-1), (0) and (1) boils down to the steps illustrated in Figure~\ref{dijagram14}. This finishes the proof of Theorem~\ref{integral calculus}.
\end{proof}
\begin{figure}[H]
    \centerline{\includegraphics[width=0.8\textwidth]{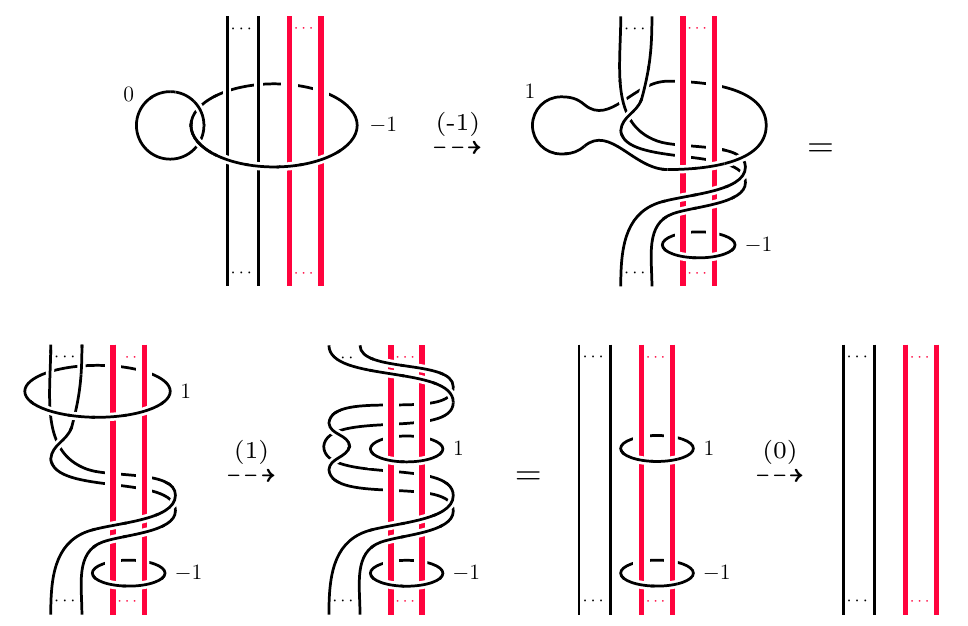} }
    \caption{Derivation of R3}\label{dijagram14}
\end{figure}

\subsection{A finite list of local moves}\label{local}

It is shown in \cite{M12} that the Fenn and Rourke list of local moves for closed, oriented 3-manifolds can be reduced to a finite list given in \cite[Fig.~3]{M12}. We prove that there is a similar finite list of local moves sufficient for $\partial$-equivalence of manifolds. In addition to Martelli's moves, which are presented in Figure~\ref{dijagram53}, we consider the local moves with a wedge thread involved shown in Figure~\ref{dijagram54}. (The framings $n'$ and $m'$ below are computed in the following way: if both threads passing through the circle $C$ with framing $\pm1$ belong to the same component $C_1$, then the new framing of this component is $n\mp lk^2(C,C_1)$; otherwise, $n'=n\mp1$ and $m'=m\mp1$.)

\begin{figure}[H] \centerline{\includegraphics[width=0.9\textwidth]{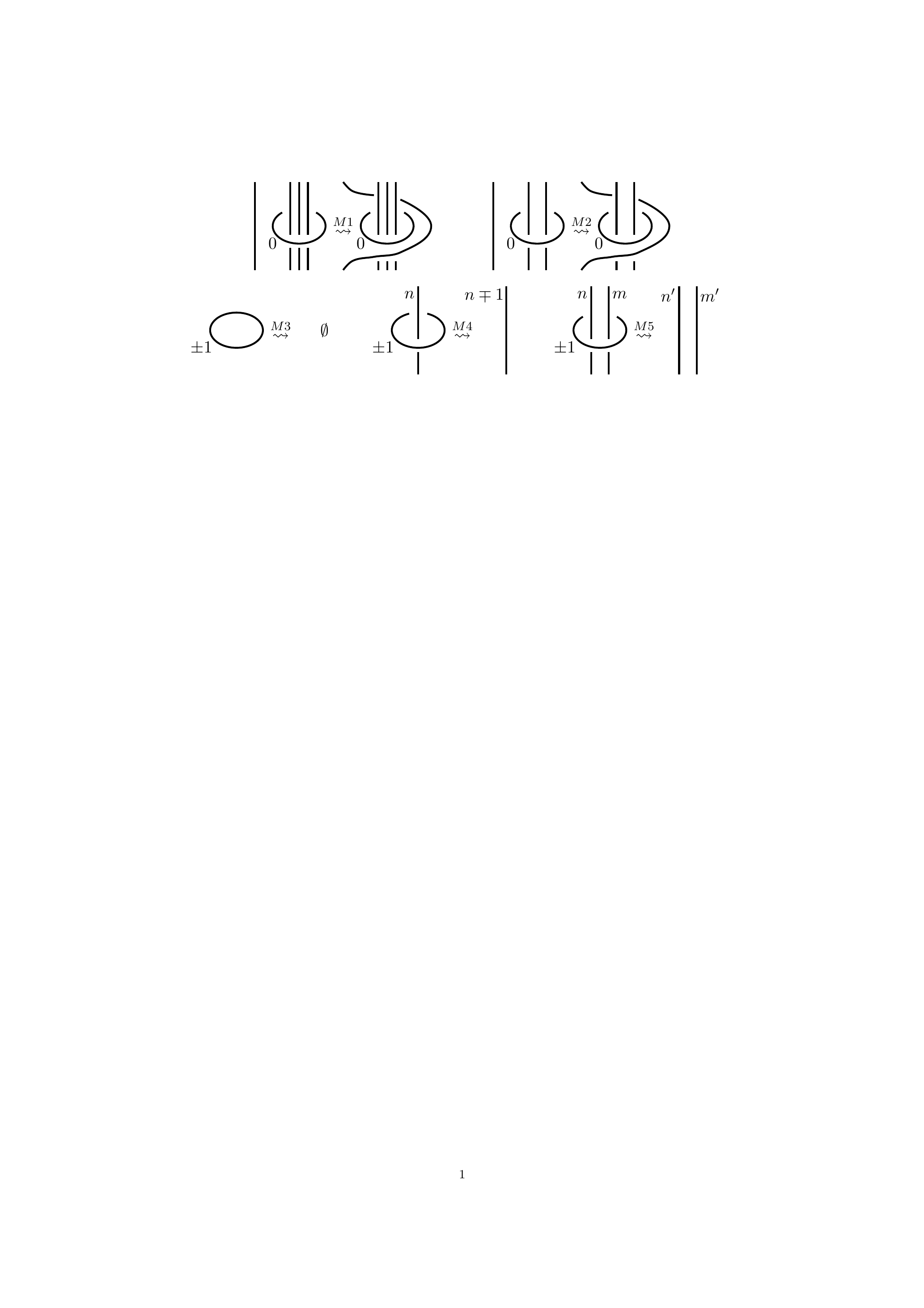}}
    \caption{Martelli's local moves M1-M5}\label{dijagram53}
\end{figure}

\begin{figure}[H]
    \centerline{\includegraphics[width=0.9\textwidth]{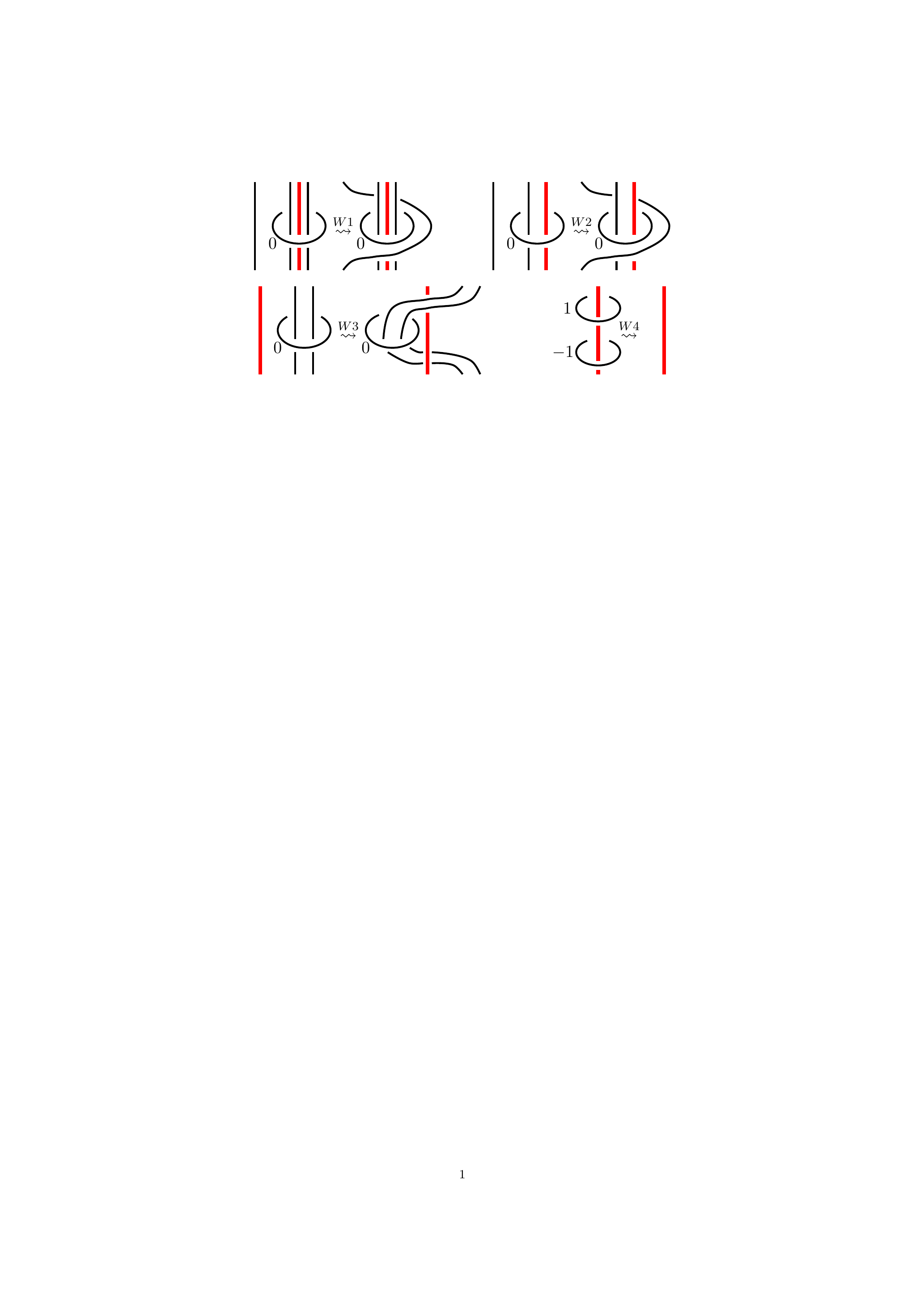}}
    \caption{Local moves with wedges W1-W4}\label{dijagram54}
\end{figure}

\begin{thm}\label{oznaka}
Two $\mathbb{Z}$-diagrams are $\mathbb{Z}$-equivalent iff they are connected by a sequence of moves M1-W4 and their inverses.
\end{thm}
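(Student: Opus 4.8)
The plan is to prove the two inclusions of equivalence relations separately, the forward one (a sequence of M1--W4 implies $\mathbb{Z}$-equivalence) being routine and the converse being the substance of the theorem.

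For the easy direction I would check that each move in the finite list is itself a consequence of Moves (-1), (0) and (1), so that diagrams connected by M1--W4 are automatically $\mathbb{Z}$-equivalent. Martelli's moves M1--M5 are supported in balls and handlebodies disjoint from all wedge threads, so there they coincide with the wedge-free instances of Moves (-1), (0), (1); since in the wedge-free situation the $\pm1$-framed circle on the right-hand side of Moves (-1) and (1) vanishes, these instances are exactly the Fenn--Rourke moves, and \cite{M12} guarantees that M1--M5 generate precisely the Fenn--Rourke equivalence. The moves W1--W4, on the other hand, are literally the instances of Move (-1) and Move (1) in which exactly two threads, at least one of them a wedge thread, pass through the $\pm1$-circle $C$; the framings $n'$ and $m'$ prescribed in the statement are exactly the values dictated by the framing rule computed in the proof of Theorem~\ref{integral calculus} (the $\mathrm{lk}^2$ correction when the two threads belong to the same component, and the $\pm1$ correction otherwise). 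Hence diagrams connected by M1--W4 are $\mathbb{Z}$-equivalent, which is one of the two inclusions.

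For the converse I would show that every application of Moves (-1), (0) and (1) can be rewritten as a finite sequence of M1--W4. Move (0) inserts or deletes an isolated $0$-framed unknot and is directly one of Martelli's moves. Moves (-1) and (1) amount to a full twist inside a disk bounded by a $\pm1$-framed unknot $C$ through which a finite collection of threads passes; these threads split into ordinary link threads and wedge threads. I would argue by induction on the number of threads crossing the disk. When at most two threads cross, the configuration is, after isotopy and Reidemeister moves, exactly one of M1--M5 (no wedge thread present) or W1--W4 (a wedge thread present), which settles the base case. For the inductive step I would peel a single thread off the twist: an $n$-thread blow-down is expressed as an $(n-1)$-thread blow-down followed by correction moves that each involve only two threads and are therefore members of the finite list. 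When the peeled thread is an ordinary link component this is exactly the strand-reduction argument of \cite{M12}, which uses only M1--M5 for the two-thread corrections. When the peeled thread is a wedge thread I would run the analogous reduction using W1--W4 in the place where Martelli uses the two-strand Fenn--Rourke move, taking care that the orientation (ingoing versus outgoing) and the sign of the framing are matched to the correct one of W1--W4, and that the induced change of framing agrees with the $\mathrm{lk}^2$ / $\pm1$ rule recorded in the statement. Since each peel-off strictly decreases the number of threads through $C$ while leaving only finite-list corrections behind, the induction terminates.

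The main obstacle is precisely this wedge-thread peel-off. Unlike an ordinary link component, a wedge thread is rigid: it may not be slid over other components, so the handle-slide manoeuvres that underlie Martelli's strand reduction are unavailable. The point of W1--W4 is to supply exactly the local moves that a handle slide would otherwise provide, allowing one wedge thread to be detached from the twisting disk at the cost of a single controlled two-thread correction. Verifying that W1--W4 genuinely suffice for this detachment in every combination of framing sign and thread orientation, and that they update all affected framings consistently with the rule in the statement, is where essentially all the work of the proof is concentrated; the wedge-free corrections are quoted directly from \cite{M12}.
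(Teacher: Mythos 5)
Your overall architecture matches the paper's: the easy direction derives each of M1--W4 from Moves $(-1)$, $(0)$ and $(1)$, and the hard direction derives the three integral-calculus moves from the finite list by an iterative, thread-by-thread reduction in which the W-moves replace the slides that are unavailable for rigid wedge threads; the paper implements exactly this via auxiliary transformations $(\dagger\emptyset)$, $(\dagger)$, $(\dagger\dagger)$ and a representative four-thread instance of Move $(-1)$. However, your treatment of Move $(0)$ is a genuine gap. Move $(0)$ is not the insertion or deletion of an isolated $0$-framed unknot --- that operation does not even preserve the manifold (it creates an $S^1\times S^2$ summand), so it could not be a move of any sound calculus, and no such move appears in Martelli's list. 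Move $(0)$ is the move of the integral calculus that involves wedge threads (its soundness in Theorem~\ref{integral calculus} rests on two mutually cancelling twist homeomorphisms), and since M1--M5 are wedge-free it cannot be ``directly one of Martelli's moves''. In the paper, deriving Move $(0)$ from the finite list is a substantive step: one first derives Moves $(-1)$ and $(1)$, uses Move $(1)$ together with W4 to obtain two further auxiliary local moves W5 and W6, and only then derives Move $(0)$ by applying W5 and W6 iteratively ($n-1$ times when $n$ wedge threads are involved). Your proposal has no counterpart of this step, so the hard direction is incomplete precisely at the one move that your peeling induction for $(\pm 1)$-circles does not reach.

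A secondary, more minor inaccuracy concerns the easy direction: you assert that W1--W4 are ``literally instances'' of Moves $(-1)$ and $(1)$. In fact W4 is an instance of Move $(0)$, and W1, W2 (like M1, M2) are slide-type moves that require genuine derivations --- the paper obtains them by the same explicit sequence used for Roberts' move R2 in the proof of Theorem~\ref{integral calculus}, and W3 by an analogous sequence. This does not invalidate the easy direction, since all of M1--W4 are indeed derivable from the three moves, but they are not mere specializations of them.
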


\begin{proof}
First we show that Moves M1-W4 can be derived from Moves (-1), (0) and (1). Moves M1, M2, W1 and W2 are derived as Move~R2 in the proof of Theorem~\ref{integral calculus}. Move W3 is derived analogously (see Figure~\ref{dijagram57}).

\begin{figure}[h!h!h!]
    \centerline{\includegraphics[width=0.9\textwidth]{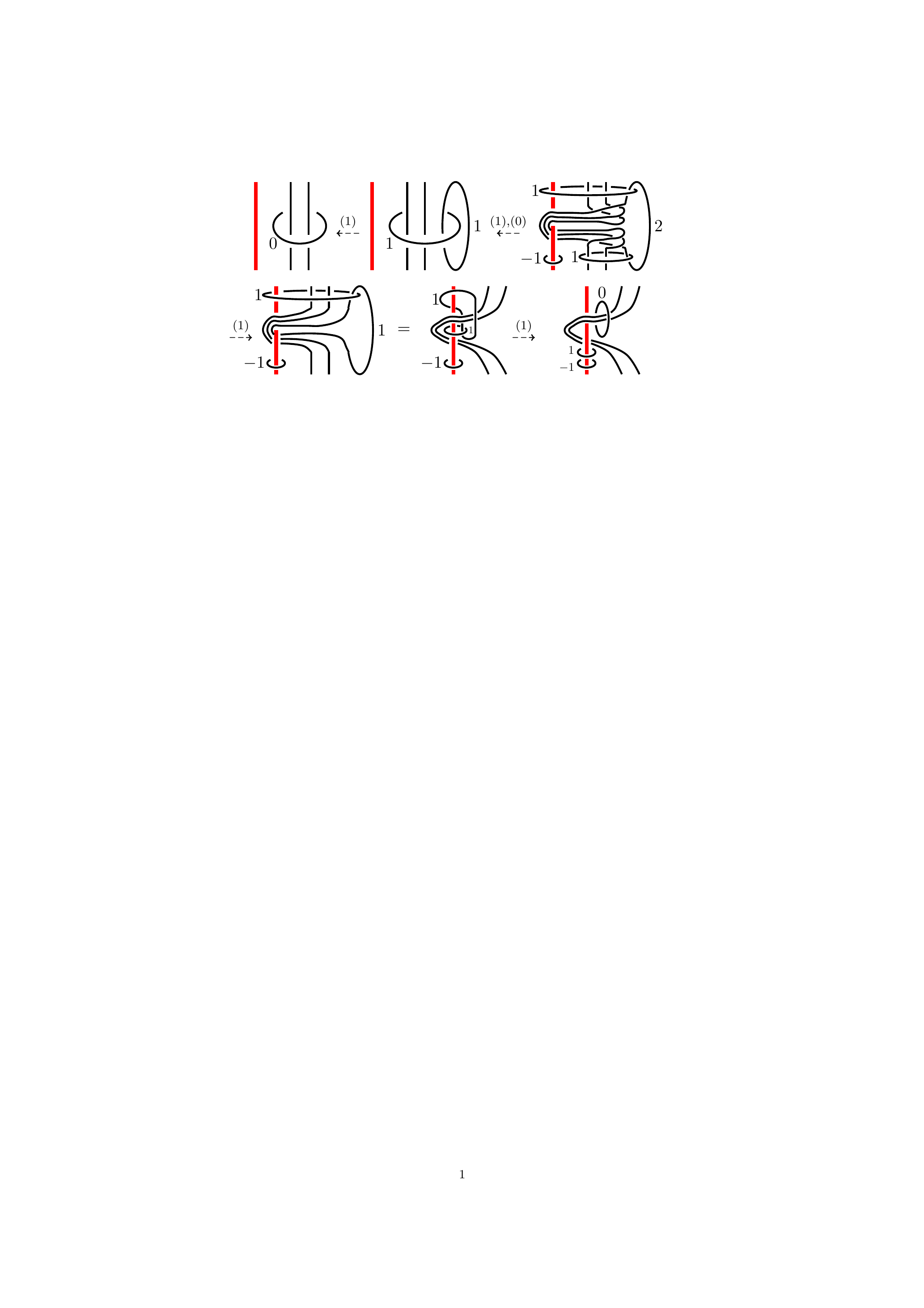}}
    \caption{Derivation of W3 (some steps require isotopy)}\label{dijagram57}
\end{figure}

Moves M3, M4 and M5 are instances of Moves (-1) and (1). Finally, Move W4 is nothing but an instance of Move~(0).

Next we show that Moves (-1), (0) and (1) are derivable from M1-W4. For this we follow \cite{M12} and start with useful local transformations $(\dagger\emptyset)$ and $(\dagger)$ shown in Figures~\ref{dijagram64a}-\ref{dijagram64}.
\begin{figure}[H]
    \centerline{\includegraphics[width=0.9\textwidth]{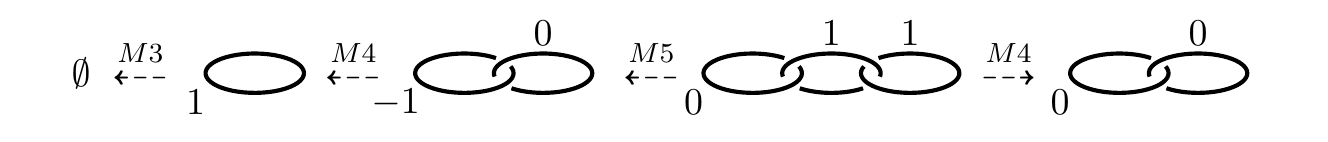} }
    \caption{Transformation $(\dagger\emptyset)$}\label{dijagram64a}
\end{figure}
\vspace{-0.5cm}
\begin{figure}[H]
    \centerline{\includegraphics[width=0.9\textwidth]{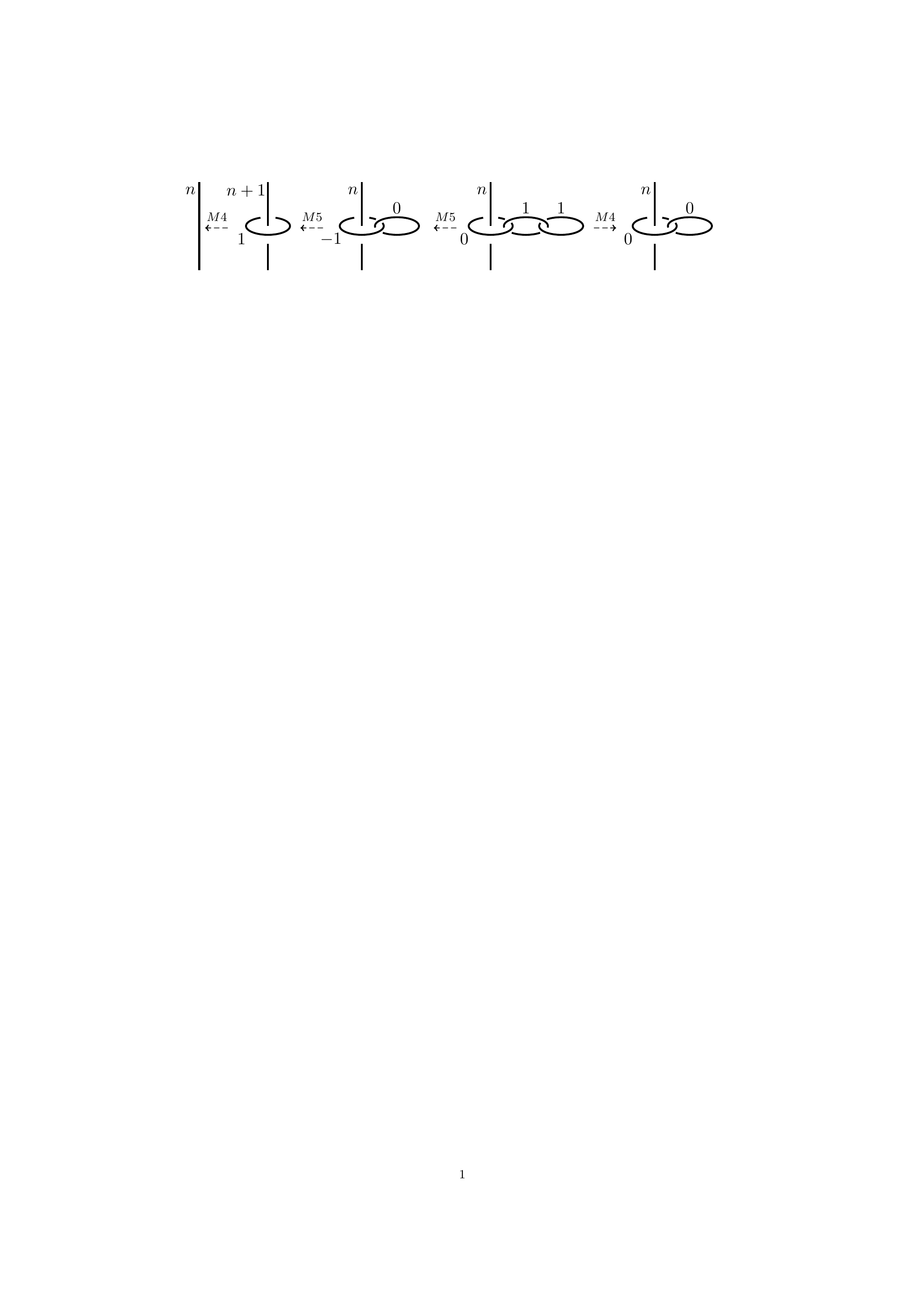} }
    \caption{Transformation $(\dagger)$}\label{dijagram64}
\end{figure}
The latter transformation, together with M2 or W3 produces the transformation $(\dagger\dagger)$ from Figure~\ref{dijagram65} where the blue thread could be either part of a wedge or of a link component. (The fragment prepared for either M2 or W3 is highlighted in the middle.)
\begin{figure}[h!h!h!]
    \centerline{\includegraphics[width=.6\textwidth]{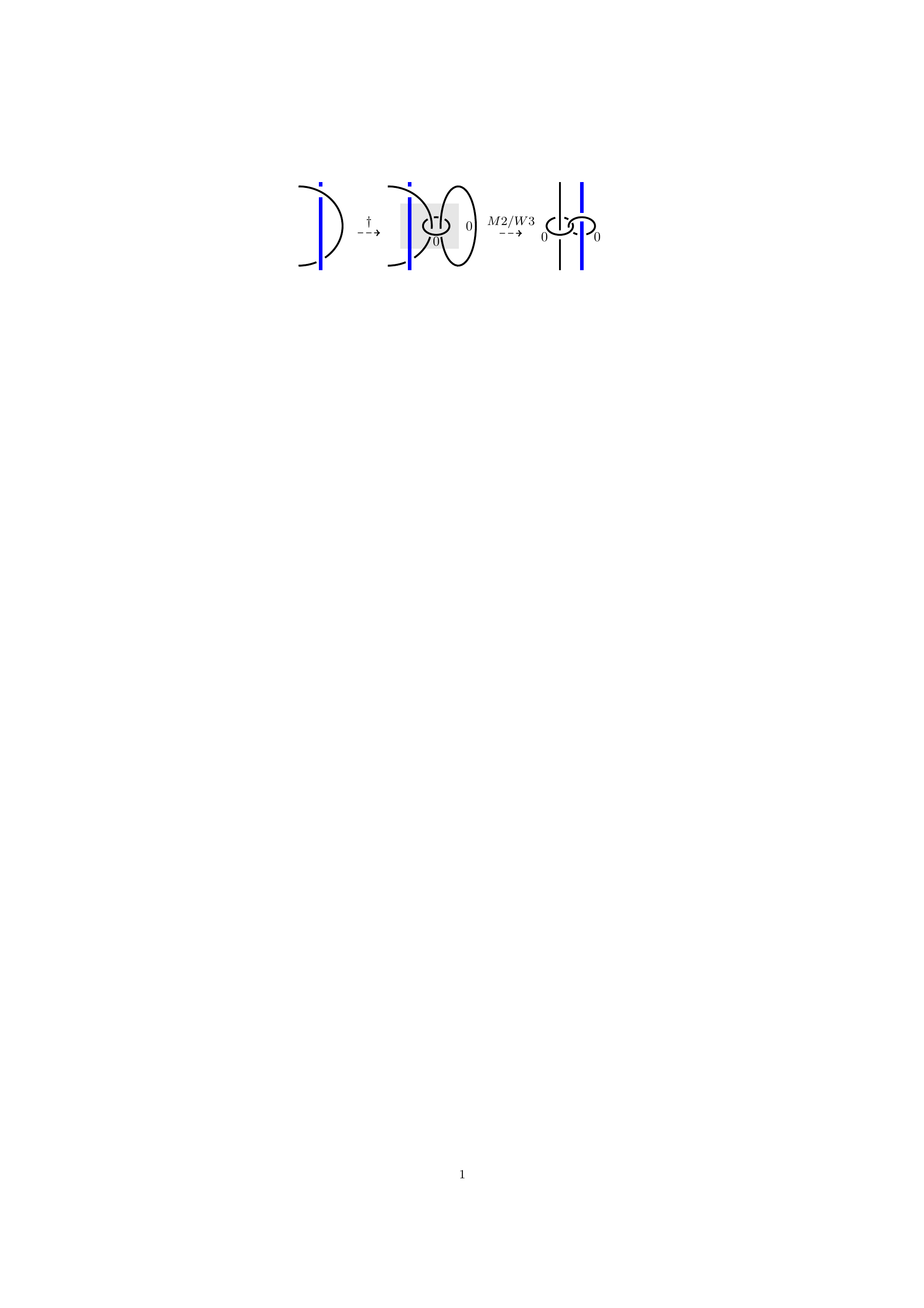} }
    \caption{Transformation $(\dagger\dagger)$}\label{dijagram65}
\end{figure}

For Move (-1) we exhibit the case of four threads (two from wedges and two from link components) passing through a circle with framing -1 (see Figure~\ref{dijagram66}).
\begin{figure}[H]
    \centerline{\includegraphics[width=0.9\textwidth]{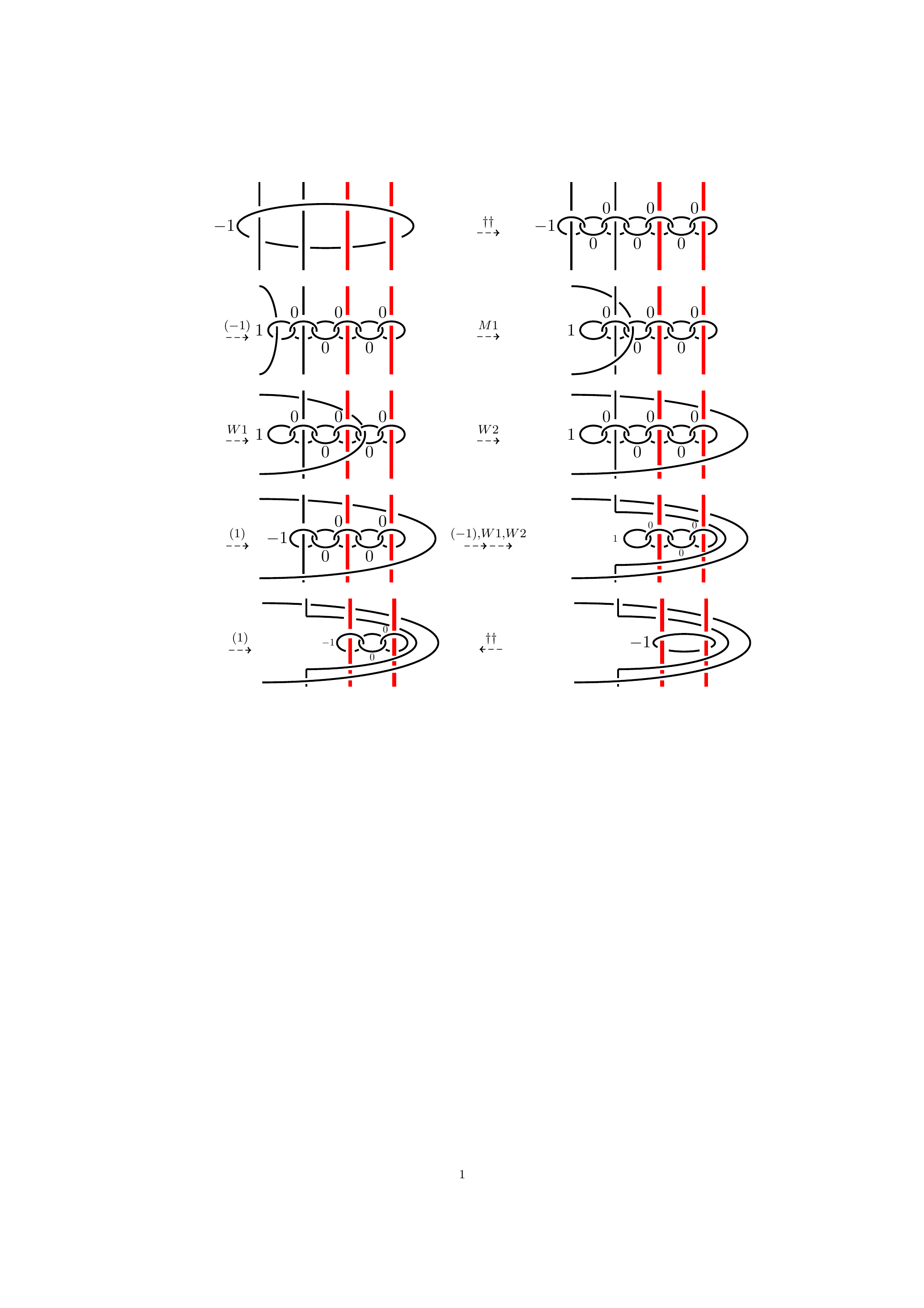} }
    \caption{Derivation of Move (-1)}\label{dijagram66}
\end{figure}
\noindent This suffices for catching the iterative procedure solving the general case. Move (1) is derived analogously. (Note that in the case of zero threads passing through a circle with framing $\pm 1$ we apply M3.)

The local moves W5 and W6 given in Figure~\ref{dijagram68} are derivable from Move (1) and W4 in the same manner as shown in Figure~\ref{dijagram57}. (Note that Move~(0) is used in Figure~\ref{dijagram57} just in the form of W4.) Thus, once we have derived Moves (-1) and (1), the moves W5 and W6 are available for derivation of Move (0).
\begin{figure}[H]
    \centerline{\includegraphics[width=.8\textwidth]{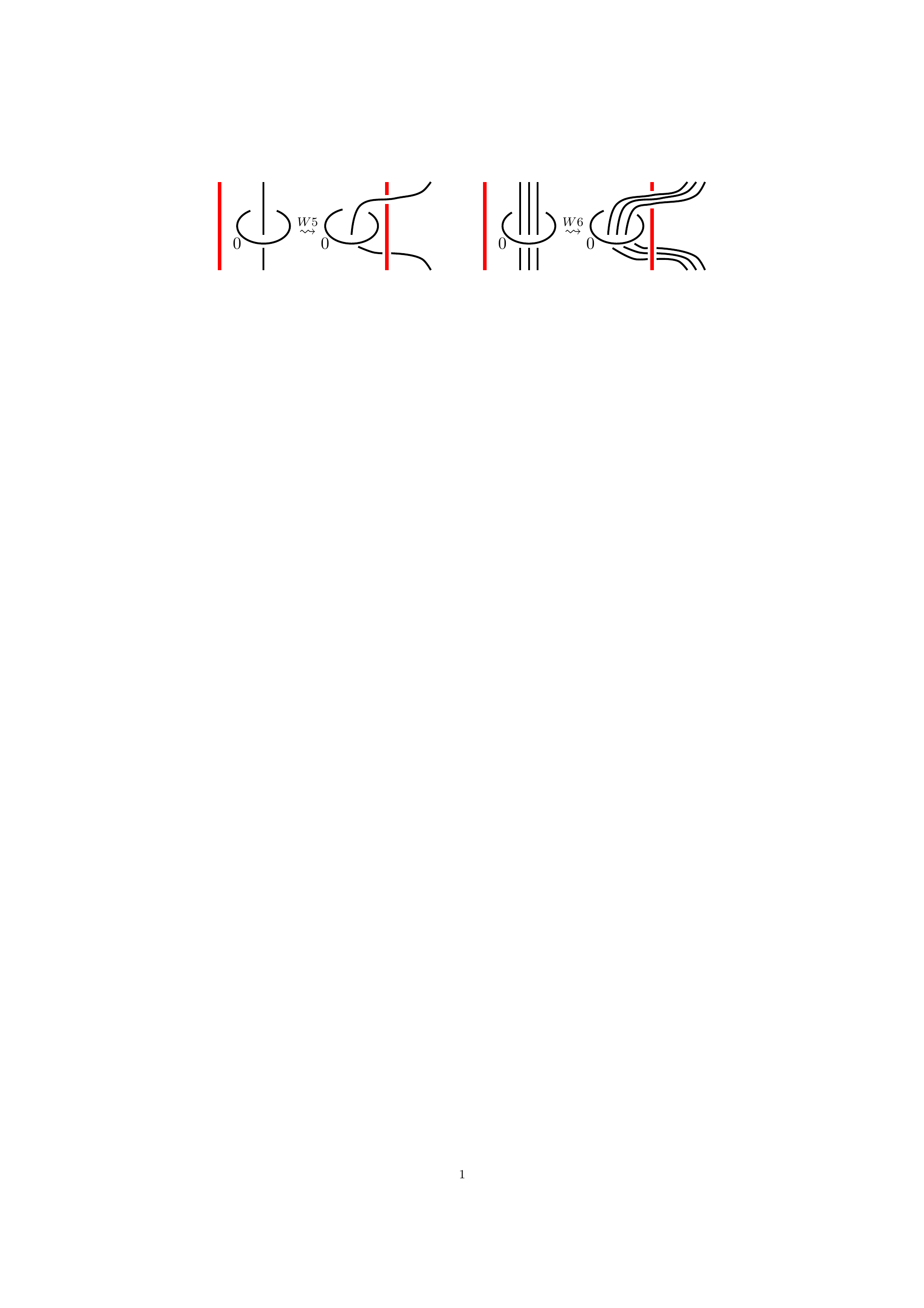}}
    \caption{Local moves W5 and W6}\label{dijagram68}
\end{figure}
A derivation of Move (0) from the local moves proceeds as in Figure~\ref{dijagram67}. (Note that in the case of $n$ wedge threads, W5 and W6 are applied $n-1$ times in this derivation.) This finishes the proof of Theorem \ref{oznaka}.
\end{proof}

\begin{figure}[H]
    \centerline{\includegraphics[width=1\textwidth]{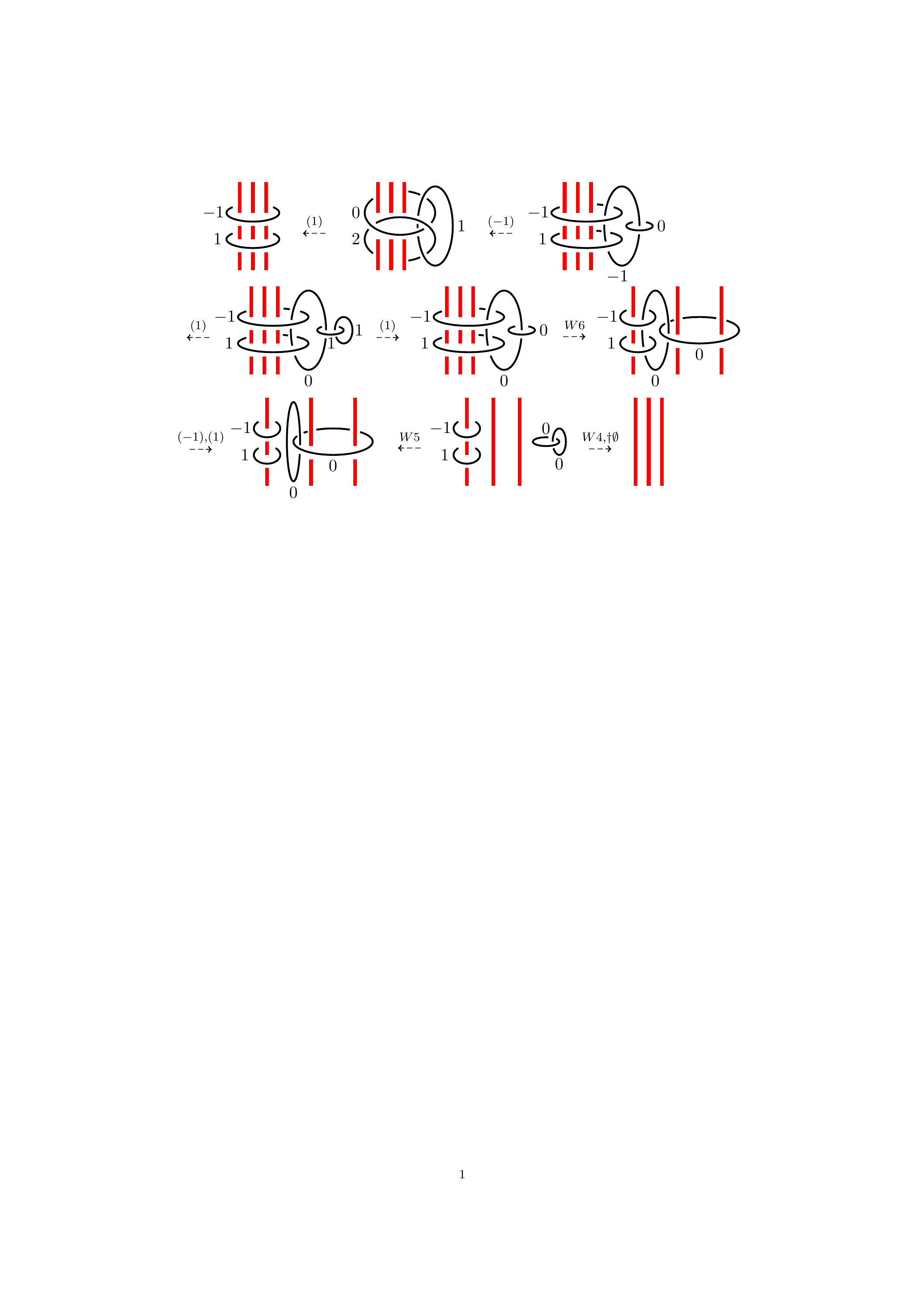}}
    \caption{Derivation of Move (0)}\label{dijagram67}
\end{figure}

\subsection{The rational calculus}

Besides the integral surgery calculus, one may consider its extension taking into account rational framing of link components. Namely, the presentation of manifolds in Section~\ref{diagrammatics} required just homologous pairs of curves (one being the core of a solid torus and the other the attaching curve on its boundary). The extension of the integral language is obtained by omitting this requirement and allowing meridian to be sewed back along any closed, essential curve with no self-intersections that winds around the boundary torus $p$ times along the meridian and $q$ times along the parallel. \emph{Essential} means that by removing this curve the rest of the torus remains connected. In this case the integers $p$ and $q$ are co-prime, or one of them is 0 and the other is 1 or -1 (see e.g. \cite[Proposition~14.1]{PS}). Moreover,  pairs $(-p,q)$ and $(p,-q)$ determine the same result of surgery.

The rational surgery calculus for closed manifolds is introduced by Rolfsen, \cite{R76}, independently of Kirby's work. He proved, by relying on Kirby's result, the completeness for this calculus in \cite{R84}. We follow that proof in showing the completeness for our rational surgery calculus. The main difference is that we formulate our moves in terms of diagrams with numerical framings, while Rolfsen's moves: \emph{Homeomorphism}, \emph{Twist} and \emph{Trivial insertion} were formulated in terms of pairs (torus, attaching curve) embedded in $S^3$.

The words in the new language are diagrams as in the language for integral calculus, save that we allow now framing to be a rational number $p/q$ including $\pm\infty$ (i.e.\ $\pm 1/0$). We call such diagrams the $\mathbb{Q}$-\emph{diagrams}. The framing $\pm\infty$ denotes the trivial surgery (note that this notation does not agree with the notation given in \cite{H07}). Besides Moves (-1) and (1), in rational surgery calculus we have Move~(2) (see Figure~\ref{dijagram32}) and Move~($\infty$). The latter move inserts (deletes) a knot projection with framing $\pm\infty$ anywhere in the diagram.

\begin{figure}[h!]
    \centerline{\includegraphics[width=1\textwidth]{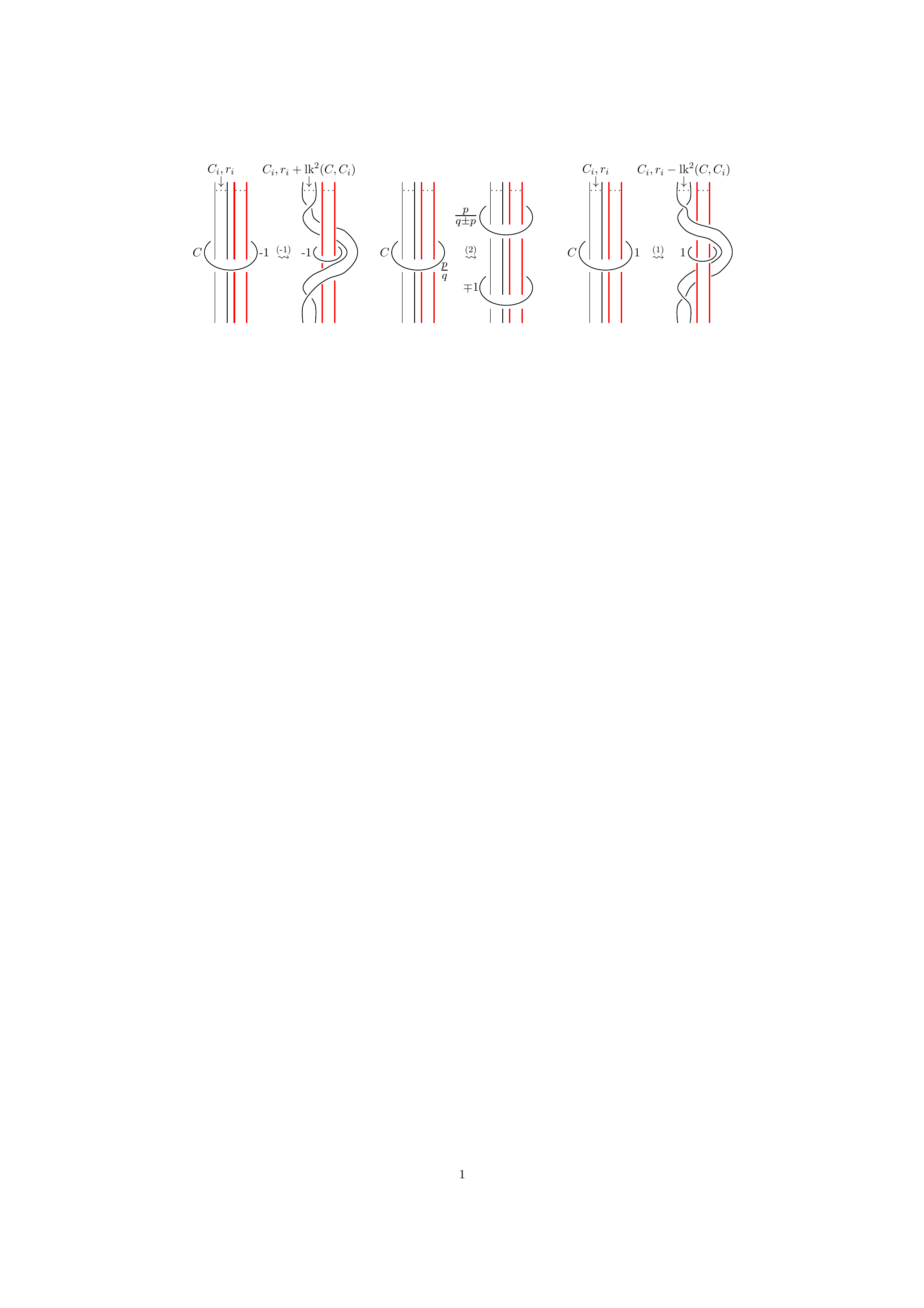}
    } \caption{Moves (-1), (2) and (1) for rational calculus}\label{dijagram32}
\end{figure}

We say that two $\mathbb{Q}$-diagrams are $\mathbb{Q}$-\emph{equivalent} if there is a finite sequence of Moves (-1), (1), (2) and ($\infty$) transforming one into the other.

\begin{prop}\label{rational calculus}
Two $\mathbb{Q}$-diagrams with identical wedges of circles denote two $\partial$-equivalent manifolds iff they are $\mathbb{Q}$-equivalent.
\end{prop}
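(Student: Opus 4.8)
The plan is to mimic Rolfsen's derivation of the completeness of the rational calculus from Kirby's theorem, replacing the role of Kirby's theorem with our already established integral completeness, Theorem~\ref{integral calculus}. Thus the whole argument is organised as a reduction of the rational calculus to the integral one. For the direction $(\Leftarrow)$ I would verify, move by move, that each of (-1), (1), (2) and ($\infty$) preserves the $\partial$-equivalence class of the denoted manifold; for the direction $(\Rightarrow)$ I would first transform both $\mathbb{Q}$-diagrams into $\mathbb{Z}$-diagrams by rational moves, then invoke Theorem~\ref{integral calculus}, and finally re-express the resulting sequence of integral moves inside the rational calculus.

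For $(\Leftarrow)$: Moves (-1) and (1) coincide with the integral moves and have already been justified, via the twist homeomorphism, in the proof of Theorem~\ref{integral calculus}, so nothing new is needed. For Move ($\infty$) I would argue that a component with framing $\pm\infty=\pm 1/0$ prescribes sewing the solid torus back along its own meridian, which is the trivial surgery; by Remark~\ref{sewing meridian} this returns the ambient manifold unchanged, and since the inserted or deleted component can be chosen in a ball disjoint from the wedges, the identification of the boundary is untouched, so the two diagrams are $\partial$-equivalent. For Move (2) I would exhibit it, as in the integral case, as the effect of a twist homeomorphism of $S^3$ supported away from the wedges of circles; reading off the induced change of framings and of the linking numbers of the strands through the spanning disk reproduces exactly the relabelling prescribed by the move, and because the homeomorphism fixes the boundary $\partial_-C$ we again obtain $\partial$-equivalence rather than mere homeomorphism.

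For $(\Rightarrow)$ the strategy has three steps. First, I would show that every $\mathbb{Q}$-diagram is $\mathbb{Q}$-equivalent to a $\mathbb{Z}$-diagram: each component carrying a genuinely rational framing $p/q$ is replaced, by iterated application of Moves (2) and ($\infty$), by a chain of integer-framed unknots obtained from the continued-fraction expansion of $p/q$ (equivalently, by the Euclidean algorithm, i.e.\ the slam-dunk reduction). Since the expansion is performed in a ball meeting only the relevant component, the wedge threads stay fixed. Second, once both diagrams are integral and denote $\partial$-equivalent manifolds, Theorem~\ref{integral calculus} guarantees a finite sequence of Moves (-1), (0) and (1) connecting them. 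Third, I would check that each of these integral moves is itself realisable in the rational calculus: Moves (-1) and (1) are literally rational moves, while Move (0) is recovered from Moves ($\infty$) and (2), a split $0$-framed unknot being produced and absorbed by a trivial insertion followed by a twist, as in the derivations of Figures~\ref{dijagram57} and \ref{dijagram67}. Concatenating the three steps yields the required $\mathbb{Q}$-equivalence.

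The main obstacle I anticipate is the first step of $(\Rightarrow)$: carrying out the continued-fraction reduction of an arbitrary rational framing to integral framings using \emph{only} the four rational moves, while keeping the wedges of circles rigid and, crucially, preserving the boundary identification rather than just the homeomorphism type. Getting the bookkeeping of framings and of the identification $(-p,q)\sim(p,-q)$ right throughout the Euclidean algorithm, and confirming that Move (0) can indeed be synthesised from ($\infty$) and (2), are the points where the argument must be made fully rigorous; everything else is a routine transcription of Rolfsen's proof with Theorem~\ref{integral calculus} in place of Kirby's theorem.
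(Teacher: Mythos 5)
Your proposal follows essentially the same route as the paper: soundness of each rational move via twist homeomorphisms, reduction of any $\mathbb{Q}$-diagram to a $\mathbb{Q}$-equivalent $\mathbb{Z}$-diagram by Rolfsen's continued-fraction/Euclidean argument (the paper simply cites \cite[Lemma~3]{R76} and notes the wedges are irrelevant there), an appeal to Theorem~\ref{integral calculus}, and recovery of Move~(0) from Moves~($\infty$) and (2). The only caveat is your claim that for Moves~(-1) and (1) ``nothing new is needed'': in the rational calculus these moves act on rationally framed components, so the paper redoes the twist-homeomorphism computation with $q_1$ parallel strands of the attaching curve, obtaining $(p_1+q_1\cdot{\rm lk}^2(C_1,C))/q_1=r_1+{\rm lk}^2(C_1,C)$ as the new framing---a routine but genuinely required adaptation.
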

\begin{proof}
$(\Leftarrow)$ In order to justify Move~(-1), we proceed as in the proof of Theorem~\ref{integral calculus}, save that in Figure~\ref{dijagram8}, if the framing of the green component (denoted again by $C_1$) is $r_1=p_1/q_1$, then we have $q_1$ black threads codirected with each green thread. After performing the twist homeomorphism, the attaching curve still winds $q_1$ times around the boundary torus along the parallel. It remains to calculate the new linking number between the black and the green component, i.e.\ the change in winding of the attaching curve around the boundary torus along the meridian. For this we use the following formula
\[
q_1\cdot O\cdot(O-I)+ q_1\cdot I\cdot(I-O)=q_1\cdot (O-I)^2
\]
where $O$ is the number of outgoing threads of $C_1$ while $I$ is the number of ingoing threads of $C_1$. Again, $O-I={\rm lk}(C_1,C)$, and the new framing of $C_1$ is $(p_1+q_1\cdot{\rm lk}^2(C_1,C))/q_1=r_1+{\rm lk}^2(C_1,C)$. Move~(1) is justified analogously.

For Move~(2), we start with the twist homeomorphism corresponding to the curve $C$ whose framing is $p/q$. The boundary torus is presented in Figure~\ref{dijagram35} (we assume the standard identification of the opposite sides of rectangles). Before performing the twist homeomorphism, the attaching curve on this torus (for $p=3$ and $q=2$) could be taken as the one in Figure~\ref{dijagram35}a, where the upper part, separated with a dotted line, bounds the well in Figure~\ref{twist}. After performing the twist homeomorphism, this curve transforms into the one illustrated in Figure~\ref{dijagram35}b. This means that the attaching curve winds around the boundary torus again $p$ times along the meridian, but now $q+p$ times along the parallel. The framing of the upper circle at the right-hand side of Move~(2) denotes this change.
\begin{figure}[h!]
    \centerline{\includegraphics[width=1\textwidth]{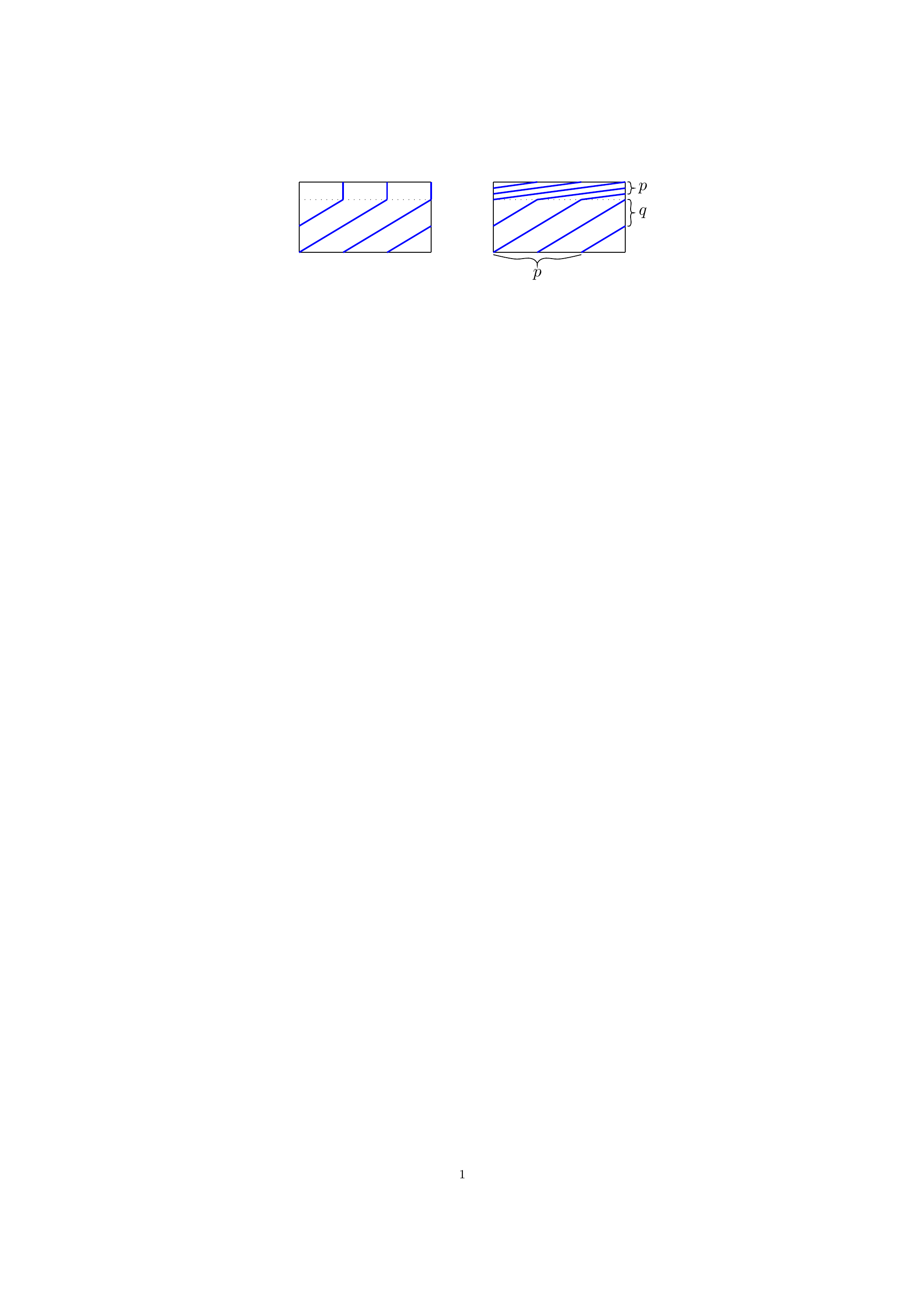}
    } \caption{a) and b)}\label{dijagram35}
\end{figure}

The situation inside the well is identical to the one discussed above in the case of Move~(-1). This means that the lower circle, with framing -1, at the right-hand side of Move~(2), according to Move~(-1), covers this change in the surgery. The inverse of the twist homeomorphism produces the framing $p/(q-p)$ of the upper and +1 of the lower circle at the right-hand side of Move~(2).

Move~($\infty$) just says that the trivial surgery could be performed anywhere without changing the manifold.

\vspace{2ex}
\noindent$(\Rightarrow)$ For this direction we show first that if two $\mathbb{Z}$-diagrams are $\mathbb{Z}$-equivalent, then they are $\mathbb{Q}$-equivalent. Since Moves~(-1) and (1) exist in both calculi, it suffices to factor Move~(0) through Moves~($\infty$) and (2) (see Figure~\ref{dijagram37}).
\begin{figure}[h!]
    \centerline{\includegraphics[width=.7\textwidth]{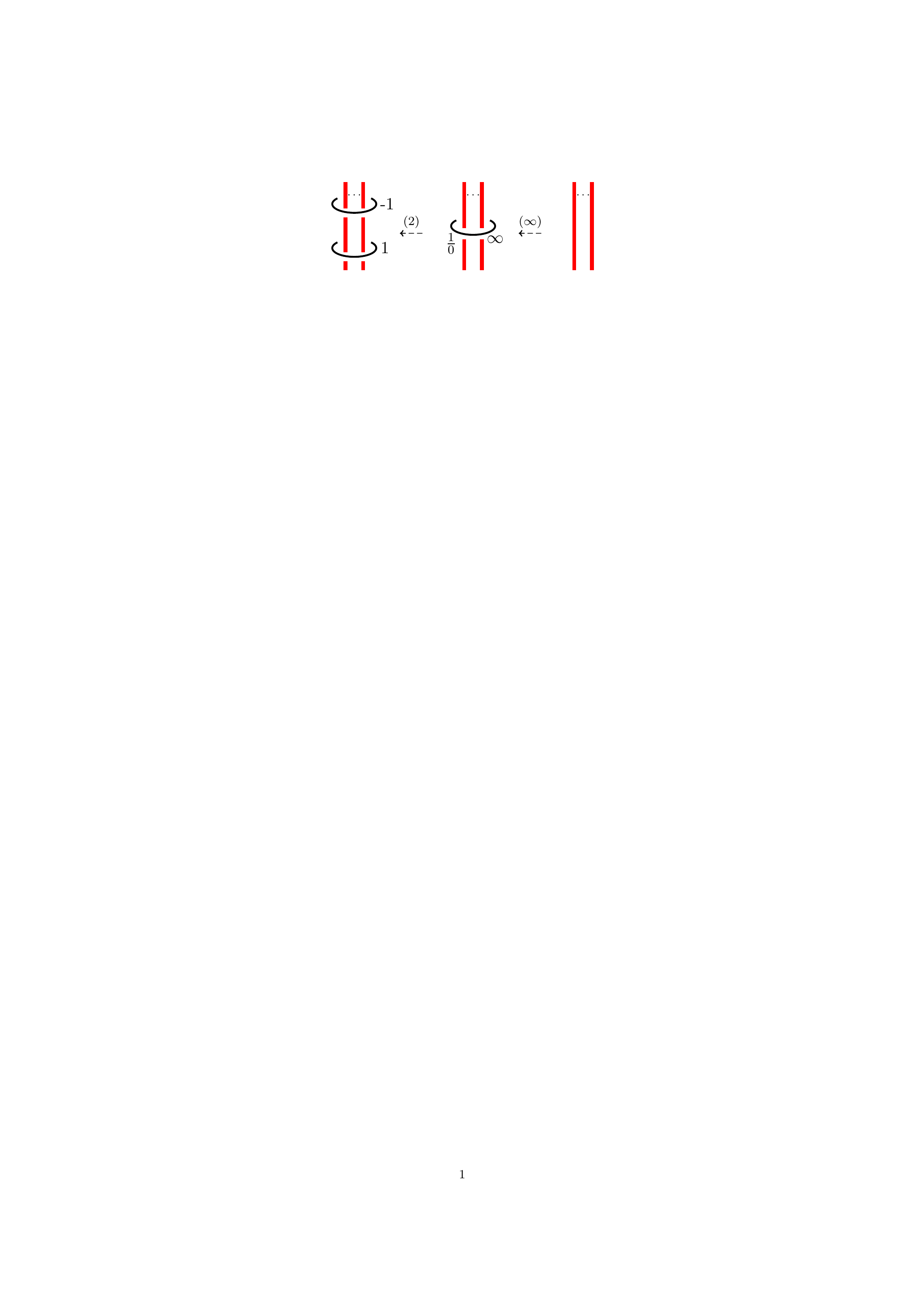}} \caption{}\label{dijagram37}
\end{figure}

Next we claim that for every $\mathbb{Q}$-diagram there exists a $\mathbb{Q}$-equivalent $\mathbb{Z}$-diagram. The proof of this fact proceeds in three steps as in the proof of \cite[Lemma~3]{R76}. For these steps, the wedges of circles are irrelevant and Rolfsen's proof covers our situation as well.

To finish the proof, let $\mathcal{D}_1$ and $\mathcal{D}_2$ be $\mathbb{Q}$-diagrams denoting $\partial$-equivalent manifolds. Let $\mathcal{D}'_1$ and $\mathcal{D}'_2$ be two $\mathbb{Z}$-diagrams, which are $\mathbb{Q}$-equivalent to $\mathcal{D}_1$ and $\mathcal{D}_2$ respectively. By $(\Leftarrow)$ direction of this proof, the manifolds denoted by $\mathcal{D}'_1$ and $\mathcal{D}'_2$ are $\partial$-equivalent. By Theorem~\ref{integral calculus}, the diagrams $\mathcal{D}'_1$ and $\mathcal{D}'_2$ are $\mathbb{Z}$-equivalent, and hence $\mathbb{Q}$-equivalent. Since $\mathbb{Q}$-equivalence is an equivalence relation, we have that $\mathcal{D}_1$ and $\mathcal{D}_2$ are $\mathbb{Q}$-equivalent.
\end{proof}


\begin{center}\textmd{\textbf{Acknowledgements} }
\end{center}
\smallskip
We would like to thank Danica Kosanovi\' c for very useful conversation concerning this topic. We are grateful to Nathalie Wahl for pointing out to us some weaknesses of the previous version of our text.

\medskip

\begin{center}\textmd{\textbf{Funding} }
\end{center}
\smallskip
Bojana Femi\' c, Jovana Obradovi\' c and Zoran Petri\' c were supported by the Science Fund of the Republic of Serbia,
Grant
No. 7749891, Graphical Languages - GWORDS. Vladimir Gruji\' c was supported by the Science Fund of the Republic of Serbia,
Grant
No. 7744592, Integrability and Extremal Problems in Mechanics, Geometry
and
Combinatorics - MEGIC.





\medskip


\begin{thebibliography}{99}
\bibitem{DPZ} {\sc D.\ \D{D}or\D{d}evi\' c}, {\sc Z.\ Petri\' c} and {\sc M.\ Zeki\' c}, {\it A diagrammatic calculus for categorical quantum protocols}, available at arXiv, 2022

\bibitem{FR79} {\sc R.A.\ Fenn} and {\sc C.\ Rourke}, {\it On Kirby's calculus of links}, \textbf{\textit{Topology}}, vol.\ 75 (1979), pp.\ 1-15

\bibitem{FH19} {\sc S.\ Friedl} and {\sc G.\  Herrmann}, \textit{Graphical Neighborhoods of Spatial Graphs}, (J.\ de Gier, C.E./ Praeger and T./ Tao editors) \textbf{\textit{2019-20 MATRIX Annals}}, MATRIX Book Series, vol.\ 4, 2021, pp.\ 627–646

\bibitem{H07} {\sc A.\ Hatcher}, \textbf{\textit{Notes on Basic 3-Manifold Topology}}, Available at \url{https://pi.math.cornell.edu/~hatcher/3M/3Mdownloads.html}, 2007

\bibitem{J07} {\sc J.\ Johnson}, \textbf{\textit{Notes on Heegaard splittings}}, Available at \url{http://www.math.ucdavis.edu/~jjohnson/notes.pdf}, 2007

\bibitem{K78} {\sc R.\ Kirby}, {\it A calculus for framed links in $S^3$}, \textbf{\textit{Inventiones Mathematicae}}, vol.\ 45 (1978), pp.\ 35-56

\bibitem{L62} {\sc W.B.R.\ Lickorish}, {\it A Representation of Orientable Combinatorial 3-Manifolds}, \textbf{\textit{Annals of Mathematics}}, vol.\ 76 (1962), pp.\ 531-540

\bibitem{M12} {\sc M.\ Martelli}, {\it A finite set of local moves for Kirby calculus}, \textbf{\textit{Journal of Knot Theory and its Ramifications}}, vol.\ 21 1250126 (2012)

\bibitem{PS} {\sc V.V.\ Prasolov} and {\sc A.B.\ Sosinski\u \i}, \textbf{\textit{Knots, Links, Braids and 3-Manifolds}}, Translations of Mathematical Monographs 154, American Mathematical Society, 1996

\bibitem{R97} {\sc J.\ Roberts}, {\it Kirby calculus in manifolds with boundary}, \textbf{\textit{Turkish Journal of Mathematics}}, vol.\ 21 (1997), pp.\ 111-117

\bibitem{R76} {\sc D.\ Rolfsen}, \textbf{\textit{Knots and Links}}, Publish or Perish, Berkeley, 1976

\bibitem{R84} --------, {\it Rational surgery calculus:extension of Kirby's theorem}, \textbf{\textit{Pacific Journal of Mathematics}}, vol.\ 110 (1984), pp.\ 377-386

\bibitem{SSS} {\sc T.\ Saito}, {\sc M.\ Scharlemann} and {\sc J.\ Schultens}, \textbf{\textit{Lecture Notes on Generalized Heegaard splittings}}, Three Lectures on Low-Dimensional Topology in Kyoto, World Scientific, Singapore, 2016

\bibitem{W60} {\sc A.D.\ Wallace}, {\it Modifications and cobounding manifolds}, \textbf{\textit{Canadian Journal of Mathematics}}, vol.\ 12 (1960), pp.\ 503-528

\end{thebibliography}
\end{document}